\theoremstyle{plain}
\newtheorem{theorem}{Theorem}[section]
\newtheorem{corollary}[theorem]{Corollary}
\newtheorem{conjecture}{Conjecture}
\newtheorem{theorem*}{Theorem}
\theoremstyle{definition}
\newtheorem{definition}[theorem]{Definition}
\newtheorem{example}[theorem]{Example}
\newtheorem{remark}[theorem]{Remark}
\newtheorem{proposition}[theorem]{Proposition}
\newtheoremstyle{case}{}{}{}{}{}{:}{ }{}
\theoremstyle{case}
\newtheorem{case}{Case}
\newcommand{\wt}{\text{wt}}
\newcommand{\ord}{\text{ord}}
\newcommand{\acs}{\text{acs}}
\newcommand{\codeg}{\text{codeg}}
\newcommand{\diam}{\text{diam}}
\newcommand{\syl}{\text{syl}}
\begin{document}
\title[Bound on the diameter of split metacyclic groups]{Bound on the diameter of \\split metacyclic groups}
\author{Kashyap Rajeevsarathy \and Siddhartha Sarkar}
\address{Department of Mathematics\\
Indian Institute of Science Education and Research Bhopal\\
Bhopal Bypass Road, Bhauri \\
Bhopal 462 066, Madhya Pradesh\\
India}
\email{kashyap@iiserb.ac.in}
\urladdr{https://home.iiserb.ac.in/$_{\widetilde{\phantom{n}}}$kashyap/}

\address{Department of Mathematics\\
Indian Institute of Science Education and Research Bhopal\\
Bhopal Bypass Road, Bhauri \\
Bhopal 462 066, Madhya Pradesh\\
India}
\email{sidhu@iiserb.ac.in}
\urladdr{https://home.iiserb.ac.in/$_{\widetilde{\phantom{n}}}$sidhu/}

\keywords{Split metacyclic groups; Diameter; Finite rings; Finite fields}

\begin{abstract}
Let $G_{m,n,k} = \mathbb{Z}_m \ltimes_k \mathbb{Z}_n$ be the split metacyclic group, where $k$ is a unit modulo $n$. We derive an upper bound for the diameter of $G_{m,n,k}$ using an arithmetic parameter called the \textit{weight}, which depends on $n$, $k$, and the order of $k$. As an application, we show how this would determine a bound on the diameter of an arbitrary metacyclic group. 
\end{abstract}

\maketitle

\section{Introduction}
\label{sec:intro}

The \textit{diameter} of a finite group $G$ with respect to a generating set $S$ is the graph diameter of the Cayley graph $\Gamma(G,S)$ of $G$ with respect to $S$. Consider the semidirect product of the two cyclic groups $\mathbb{Z}_m$ and $\mathbb{Z}_n$ given by the presentation 
 $$G_{m,n,k}:= \mathbb{Z}_m \ltimes_k \mathbb{Z}_n = \langle x,y \,| \,x^m=y^n=1, \, x^{-1} yx = y^k \rangle,$$ where
 $\mathbb{Z}_m = \langle x \rangle$, $\mathbb{Z}_n = \langle y \rangle$, and $k \in {\mathbb Z}^{\times}_n$ of order $\alpha \mid m$, where ${\mathbb Z}^{\times}_n$ denote the group of units of ${\mathbb Z}_n$ with respect to multiplication. We define the diameter of $G_{m,n,k}$ (in symbols $\diam(G_{m,n,k})$) to be the graph diameter of 
 $\Gamma(G_{m,n,k},\{x,x^{-1},y,y^{-1}\}).$  The diameter of finite groups and their bounds have been widely studied, especially from the viewpoint of efficient communication networks (see~\cite{B1,C1} and the references therein). In particular, the networks arising from the Cayley graphs of groups in the subfamily $\{G_{ck,c^2{\ell}, c\ell+1}\}$, also known in computer science parlance as \textit{supertoroids}, have been extensively analyzed~\cite{B1,D4,D3,LV1}. For example, in~\cite{D1,D2}, it was shown that for $c \geq 8$, $\diam(G_{ck,c^2{\ell}, c\ell+1})= [ck/2]+[c\ell/2]$. However, to our knowledge, the diameter bounds for arbitrary groups in $\{G_{m,n,k}\}$ have not been studied. This problem also has connections with the well known degree-diameter problem pertaining to this family of graphs (see~\cite{E1,MH1,VT1}). This is the main motivation behind undertaking such an analysis in this paper. 

Every element of $g \in G_{m,n,k}$ has the unique expression as $g = x^a y^b$. A path $P$ from $1$ to an element $g \in G_{m,n,k}$ would take the form $g = \prod_{i=1}^t x^{a_i} y^{b_i}$. Such a path is said to be \textit{reduced} if $a_i \not\equiv 0 \pmod{m}$, for $2 \leq i \leq t$, and $b_i \not\equiv 0\pmod{n}$, for $1 \leq i \leq t-1$. We define $t$ to be the \textit{syllable} of the reduced path $P$ (as above), and $\sum_{i=1}^{t} |a_i| + |b_i|$ to be its \textit{length} $l(P)$. Denoting by $\mathcal{P}_g$, the collection of all reduced paths in $G$ from $1$ to $g$, we have  
$\|x^a y^b\| = \min \{l(P): P \in \mathcal{P}_g \},$ where $\| \, \|$ is the usual word norm in $G_{m,n,k}$. Thus, the diameter of $G_{m,n,k}$ is given by
\[
\diam(G_{m,n,k}) = {\mathrm {max}} \{ \|x^a y^b\|: 0 \leq a \leq m-1,\, 0 \leq b \leq n-1 \}.
\] 

It is apparent that $[m/2] \leq \diam(G_{m,n,k}) \leq [m/2] + [n/2]$. In reality, $\diam(G_{m,n,k}) = [m/2] + \delta$, where $\delta$ is significantly smaller than $[n/2]$. For example, we can show that $\diam(G_{60,61,2}) = 31$ (see Section~\ref{sec:compute_weights}). In order to obtain a better bound for $\diam(G_{m,n,k})$, we begin by noting that
$$ \prod_{i=1}^t x^{a_i} y^{b_i} = x^{a_1 + \dotsc + a_t} y^{b_1 k^{a_2 + \dotsc + a_t} + \dotsc + b_{t-1} k^{a_t} + b_t}.$$
Consequently, the problem of computing $\| x^a y^b \|$ reduces to the following nonlinear optimization problem in the pair of rings $(\mathbb{Z}_m,\mathbb{Z}_n)$: 
\[ \tag{$\dagger$} \begin{array}{rlr}
\text{minimize} & \displaystyle \sum_{i=1}^{t} |a_i| + |b_i| \, & \text{(in }\mathbb{Z}), \\ 
\text{subject to} & \displaystyle a_1+ \ldots+a_t \equiv a & \pmod{m}, \\ \\
\text{and} & b_1 k^{a_2 + \dotsc + a_t} + \dotsc + b_{t-1} k^{a_t} + b_t \equiv b & \pmod{n}.
\end{array} \]

Fix a positive integer $n \geq 3$, and consider a unit $k \in \mathbb{Z}_n^{\times}$ of multiplicative order $\ord(k) = \alpha \geq 2$. For $0 \leq i \leq \alpha - 1$ and an integer $1 \leq \lambda \leq [n/2]$, a $k$-interval is a set of the form $A(\lambda, i) = \{ a k^i : -\lambda \leq a \leq \lambda \}$. We further reduce the problem of solving $(\dagger)$ to the problem of determining the least positive integer $\lambda = \lambda_0 + \dotsc + \lambda_{\alpha-1}$ so that 
\[
{\mathbb Z}_n = \sum_{i=0}^{\alpha-1} A(\lambda_i, i). 
\] 
We will call this the problem of \textit{covering  the ring ${\mathbb Z}_n$ by sum sets of $k$-intervals}. By showing that the solution to this covering problem in $\mathbb{Z}_n$ depends on two parameters, namely $\wt(n,k;\alpha)$ and $\deg(n,k;\alpha)$ (Section~\ref{sec:comb_tools}), we obtain our main result (Theorem~\ref{thm:main}), which gives a bound for $\diam(G_{m,n,k})$. 

\begin{theorem*}[Main theorem] 
\label{thm:main}
Let $G_{m,n,k}$ be the split metacyclic group given by the presentation
\[
G_{m,n,k} = \langle x, y : x^m = 1 = y^{n}, x^{-1} yx = y^k \rangle,
\]
where $k$ has order $\alpha$ in the group ${\mathbb Z}^{\times}_n$ of units. If $\alpha$ is even and $k^{\alpha/2} \equiv -1 \pmod{n}$, then
\[
\diam(G_{m,n,k}) \leq 
\begin{cases}
[m/2] + \wt(n, k; \alpha), &\mbox{if } \alpha \neq m, \\
[m/2] + \wt(n, k; \alpha) + \deg(n,k;\alpha), &\mbox{if } \alpha = m.
\end{cases}
\]
\end{theorem*}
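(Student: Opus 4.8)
The plan is to bound $\diam(G_{m,n,k})$ by producing, for every pair $(a,b)$ with $0\le a\le m-1$ and $0\le b\le n-1$, an explicit reduced path from $1$ to $x^a y^b$ whose length does not exceed the claimed bound; since $\diam(G_{m,n,k})=\max_{a,b}\|x^ay^b\|$, this suffices. I would first reinterpret the optimization $(\dagger)$ geometrically. Writing $e_i=a_{i+1}+\dots+a_t$ for the tail-sums of the $x$-exponents, so that $e_t=0$ and $e_0=a_1+\dots+a_t\equiv a\pmod m$, the product formula shows that the $y$-exponent equals $\sum_{i=1}^t b_i k^{e_i}$, that the power applied to $b_i$ depends only on $e_i\bmod\alpha$ (recall $\alpha\mid m$, since $k^m\equiv1$), and that the $x$-cost $\sum_i|a_i|=\sum_i|e_{i-1}-e_i|$ is exactly the total variation of the integer walk $e_0\to e_1\to\dots\to e_t=0$. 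Thus $\|x^ay^b\|$ is the minimum, over all such walks and all deposits $b_i$, of (total variation of the walk) $+\sum_i|b_i|$, subject to $b\equiv\sum_i b_ik^{e_i}\pmod n$.

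Next I would separate the two costs using Propositions~\ref{prop:first_red_step} and~\ref{prop:second_red_step}. These let me replace the $y$-side by an optimal representation $b\equiv\sum_j c_j k^{j}\pmod n$ of total weight $\sum_j|c_j|\le\wt(n,k;\alpha)$ and of degree at most $\deg(n,k;\alpha)$. Here the hypothesis that $\alpha$ is even with $k^{\alpha/2}\equiv-1$ is essential: since $k^{j+\alpha/2}\equiv -k^{j}$, I may replace any power index $j\ge\alpha/2$ by $j-\alpha/2$ at the cost of flipping the sign of $c_j$, without changing the weight. Consequently the support of the representation may be assumed to lie in a window of $\alpha/2$ consecutive residues modulo $\alpha$, and any such window meets each antipodal pair $\{j,\,j+\alpha/2\}$ in exactly one residue; this is the structural fact that keeps the required $x$-travel short.

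I would then build the path explicitly. Choose the starting level $e_0\in\{a,\,a-m\}$ with $|e_0|=\min(a,m-a)\le[m/2]$, and let the walk move monotonically toward $0$, inserting a single short detour if necessary so that the set of levels it visits contains, modulo $\alpha$, a transversal of the antipodal pairs supporting the representation; as the walk passes the level congruent to the chosen representative of $j$, I deposit $c_j$ (or $-c_j$ after the sign flip). When $\alpha\ne m$ we have $\alpha\le m/2$, so the window has length $\le\alpha/2\le m/4$, and a case analysis on the size of $a$ shows the total variation of the walk never exceeds $[m/2]$; combined with the $y$-cost this yields $\|x^ay^b\|\le[m/2]+\wt(n,k;\alpha)$. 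When $\alpha=m$ there is no spare period to absorb the sweep: the walk must reach out to level $\deg(n,k;\alpha)$ in addition to connecting $a$ to $0$, which costs at most an extra $\deg(n,k;\alpha)$, giving $\|x^ay^b\|\le[m/2]+\wt(n,k;\alpha)+\deg(n,k;\alpha)$.

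The main obstacle I anticipate is the total-variation bound in the last step: I must check, uniformly in $a$ and across the parity of $m$, that the monotone-plus-detour walk can be arranged to both end at $0$ and sweep the required window while staying within the $[m/2]$ (respectively $[m/2]+\deg(n,k;\alpha)$) budget, and that it can be taken to be a \emph{reduced} path, i.e.\ with $a_i\not\equiv0\pmod m$ for $i\ge2$ and $b_i\not\equiv0\pmod n$ for $i\le t-1$. Verifying the transversal/window claim and pinning down exactly why a proper divisor $\alpha\le m/2$ lets the sweep be absorbed into $[m/2]$, while $\alpha=m$ forces the extra $\deg(n,k;\alpha)$ term, are the crux; the remaining estimates are routine floor-function bookkeeping.
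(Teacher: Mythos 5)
Your proposal follows essentially the same route as the paper: the $y$-cost is controlled by $\wt(n,k;\alpha)$ via the two reduction propositions, the hypothesis $k^{\alpha/2}\equiv -1 \pmod{n}$ is used exactly as in Theorem~\ref{thm:bound_for_deg_nkalpha} to fold the support of the representation down to degree at most $\alpha/2$ (your ``window''), and your monotone-plus-detour walk depositing the $b_i$ at the levels of a minimal prime sequence is precisely the paper's explicit path $(**)$, with your case analysis on $a$ matching the paper's Cases 1--3 (including the observation that $\alpha \mid m$ forces $\alpha \leq m/2$ when $\alpha \neq m$, which is what absorbs the sweep into $[m/2]$, while $\alpha = m$ costs the extra $\deg(n,k;\alpha)$). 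The total-variation/transversal language is a repackaging rather than a genuinely different argument, and the details you defer are exactly the floor-function checks the paper carries out.
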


\noindent Based on our observations, we believe that $\diam(G_{m,n,k}) \leq [m/2] + \wt(n, k; \alpha)$ should hold true, irrespective of the conditions on $m,\,n,\,k$, and $\alpha$. As a direct application of our main result, we obtain an upper bound for the diameter of an arbitrary metacyclic group (Corollary~\ref{cor:diam_bound_for_primes}). 

In practice, it is difficult to compute $\wt(n,k;\alpha)$, or provide a reasonable upper bound for it. Nevertheless, we show that for an odd prime $p$, the growth of $\wt(p^n,k;\alpha)$ is at most linear in $n$ (Corollary {\ref{rem:bound_prime_power}}).

\begin{theorem*}
Let $p$ be an odd prime and $k \in \mathbb{Z}_{p^n}^{\times}$ with $\ord(k) = p^{n-1}(p-1)$. Denote $\wt(p,s;p-1)$ by $\wt(p)$, where $s$ is the image of $k$ under the natural surjection ${\mathbb Z}_{p^n} \rightarrow {\mathbb Z}_p$. Then,
\[
\wt(p) \leq \wt(p^n, k; p^{n-1}(p-1)) \leq 2 n \wt(p). 
\]

\end{theorem*}

\noindent A similar bound is obtained for the case when $p = 2$. Finally, we derive an upper bound of $\wt(p)$, when $p$ is an odd prime (Theorem {\ref{thm:prime_wt_bounds}}).

\begin{theorem*}
 Let $p$ be an odd prime, and let $s \in \mathbb{Z}_p^{\times}$ with $\ord(s) = p-1$. Then, 
\[
\wt(p) \leq 
\begin{cases}
{\frac {p+3}{4}} &\mbox{if } p \equiv 1 \pmod{4}, \\
{\frac {p+5}{4}} &\mbox{if } p \equiv 3 \pmod{4}.
\end{cases}
\]
\end{theorem*}

\section{Some combinatorics pertaining to the covering  problem in ${\mathbb Z}_n$}
\label{sec:comb_tools}

We will now introduce some formal notations to make the problem of covering of ${\mathbb Z}_n$ more precise. Fix a positive integer $n \geq 3$, and consider a unit $k \in \mathbb{Z}_n^{\times}$ of multiplicative order $\ord(k) = \alpha \geq 2$. 

\begin{definition}
\label{defn:omega_set}
Given a pair, ${\underline i} = (i_1, i_2, \dotsc, i_r)$ and ${\underline \lambda} = (\lambda_1, \lambda_2, \dotsc, \lambda_r)$, of sequences of integers such that 
\[
\alpha - 1 \geq i_1 > i_2 > \dotsc > i_r \geq 0 \text{ and } \lambda_j \geq 0, \text{ for } 1 \leq j \leq r,
\] 
we define 
\[
\Omega ({\underline i}, {\underline \lambda}) = \{ b_1 k^{i_1} + \dotsc + b_{r-1} k^{i_{r-1}} + b_r k^{i_r} \pmod{n} \, :\, |b_i| \leq \lambda_i, \, 1 \leq i \leq r\}.
\]
Sometimes we write $\Omega ({\underline i}, {\underline \lambda})$ as $\Omega (k({\underline i}), {\underline \lambda})$, where $k({\underline i}) = (k^{i_1}, \dotsc, k^{i_r})$. We will refer to $i_1$ as the \textit{degree},  and the smallest nonzero number among the $i_k$, for $1 \leq k \leq r$ will be called the \textit{co-degree} of the sequence ${\underline i}$, which we denote by $\deg({\underline i})$ and 
$\codeg({\underline i})$, respectively. The integer $r \geq 1$ will be referred as the \textit{length} of ${\underline i}$.
\end{definition}

 Since $k \in {\mathbb Z}_n^{\times}$, we have $k {\mathbb Z}_n = {\mathbb Z}_n$, and so for each sequence $\underline i$, there exists a finite sequence $\underline \lambda$ (mod $n$) such that $\Omega ({\underline i}, {\underline \lambda}) = {\mathbb Z}_n$. This leads us to the following definition.

\begin{definition}
\label{def:wt}
Given a pair, $\underline{i}$ and $\underline{\lambda}$, of sequences as in Defintion~\ref{defn:omega_set}, we define:
\begin{enumerate}[(i)]
\item The \textit{weight of $\Omega = \Omega ({\underline i}, {\underline \lambda})$} as
$$\wt(\Omega) := \lambda_1 + \lambda_2 + \dotsc + \lambda_r.$$ 
\item The \textit{weight of $(n, k)$ with respect ${\underline i}$} as
$$\wt(n, k; {\underline i}) := \min \{ \wt(\Omega ({\underline i}, {\underline \lambda})) : \Omega ({\underline i}, {\underline \lambda}) = {\mathbb Z}_n \}.$$
\item The \textit{weight of $(n, k)$ of level $r$} as
$$\wt(n, k; r) := {\mathrm {min}} \{ \wt(n, k; {\underline i}) : {\underline i} : \alpha - 1 \geq i_1 > i_2 > \dotsc > i_r \geq 0 \}.$$
\end{enumerate}
\end{definition}

\begin{remark} 
\label{same-cyclic-group}
\noindent From the definition of $\wt(n, k; \alpha)$, it is clear that $\wt(n, k; \alpha) = \wt(n, k^{\prime}; \alpha)$, whenever $k$ and $k^{\prime}$ generate the same cyclic subgroup of ${\mathbb Z}^{\times}_n$. 

\end{remark}

\noindent In our calculations, we will require sequences ${\underline i}$ with $i_r = 0$, which we call \textit{reduced sequences}. 

\begin{definition}
Given a sequence ${\underline i}$ as in Definition~\ref{defn:omega_set}, the \textit{complement  of $\underline{i}$} is the sequence $I(\underline{i}) = (j_1,\ldots,j_r)$ defined by
\[
\alpha - 1 \geq j_1 = \alpha - (i_1 - i_2) > j_2 = \alpha - (i_1 - i_3) > \dotsc > j_{r-1} = \alpha - (i_1 - i_r) > j_r = 0.
\] 
\end{definition}
\noindent In the following proposition, we show that $I(\underline{i})$ is a reduced sequence of length $r$ having the same weight as ${\underline i}$. 

\begin{proposition}
\label{lem:dual_I}
Consider sequences ${\underline i}$ and ${\underline \lambda}$ as in Definition~\ref{defn:omega_set}. If $\Omega ({\underline i}, {\underline \lambda}) = {\mathbb Z}_n$, then
\begin{enumerate}[(i)]
\item $\Omega (I(\underline{i}), {\underline \lambda^{\prime}}) = {\mathbb Z}_n,$ where ${\underline \lambda^{\prime}} = (\lambda_2, \lambda_3, \dotsc, \lambda_r, \lambda_1)$, and
\item $\wt(n, k; {\underline i}) = \wt(n, k; I (\underline{i}))$. 
\end{enumerate}
\end{proposition}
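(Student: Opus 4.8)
The plan is to reduce both parts to a single set identity relating $\Omega(I(\underline{i}), \underline{\lambda}')$ to $\Omega(\underline{i}, \underline{\lambda})$ through multiplication by the unit $k^{-i_1}$. The starting observation is that, since $k^{\alpha} = 1$, each dual exponent collapses: $k^{j_s} = k^{\alpha - (i_1 - i_{s+1})} = k^{i_{s+1} - i_1}$ for $1 \le s \le r-1$, while $k^{j_r} = k^{0} = 1$. Multiplying a generic element $b_1 k^{i_1} + \dotsc + b_r k^{i_r}$ of $\Omega(\underline{i}, \underline{\lambda})$ by $k^{-i_1}$ produces $b_1 + b_2 k^{i_2 - i_1} + \dotsc + b_r k^{i_r - i_1}$, whose monomials are precisely $k^{j_1}, \dotsc, k^{j_{r-1}}$ together with the constant term $k^{j_r} = 1$.

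First I would prove the identity $\Omega(I(\underline{i}), \underline{\lambda}') = k^{-i_1}\,\Omega(\underline{i}, \underline{\lambda})$ as subsets of $\mathbb{Z}_n$, valid for \emph{every} choice of $\underline{\lambda}$ (not only those yielding all of $\mathbb{Z}_n$). This is a termwise matching: the correspondence $c_s = b_{s+1}$ for $1 \le s \le r-1$ and $c_r = b_1$ carries the coefficient bounds $|b_{s+1}| \le \lambda_{s+1}$ and $|b_1| \le \lambda_1$ exactly onto the bounds prescribed by $\underline{\lambda}' : \lambda_2, \dotsc, \lambda_r, \lambda_1$. Since $k^{-i_1}$ is multiplication by a unit, it maps $\mathbb{Z}_n$ bijectively onto itself; hence $\Omega(\underline{i}, \underline{\lambda}) = \mathbb{Z}_n$ forces $\Omega(I(\underline{i}), \underline{\lambda}') = \mathbb{Z}_n$, which is statement (i).

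For (ii), I would note that the cyclic shift $\underline{\lambda} \mapsto \underline{\lambda}'$ preserves the total weight $\lambda_1 + \dotsc + \lambda_r$ and, being a permutation of $r$-tuples, is a bijection. Combined with the unconditional set identity above, it restricts to a weight-preserving bijection between the families $\{\underline{\lambda} : \Omega(\underline{i}, \underline{\lambda}) = \mathbb{Z}_n\}$ and $\{\underline{\mu} : \Omega(I(\underline{i}), \underline{\mu}) = \mathbb{Z}_n\}$. Taking the minimum of the (equal) total weights over these two families then yields $\wt(n, k; \underline{i}) = \wt(n, k; I(\underline{i}))$.

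The main point requiring care is establishing the set identity as a genuine equality holding for all $\underline{\lambda}$, rather than merely the one-directional inclusion obtained by pushing a single admissible $\underline{\lambda}$ forward; this is exactly what supplies the reverse inequality in (ii) at no extra cost and lets me sidestep any question of whether $I$ is an involution (in general it is not). The only routine verifications left are that $I(\underline{i})$ is in fact a reduced sequence of length $r$, i.e. $\alpha - 1 \ge j_1 > \dotsc > j_r = 0$, which follows from $1 \le i_1 - i_{s+1} \le \alpha - 1$, together with the index bookkeeping in the coefficient matching.
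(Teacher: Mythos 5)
Your proof is correct and follows essentially the same route as the paper: both rest on the observation that multiplying by $k^{\alpha-i_1}=k^{-i_1}$ carries $\Omega(\underline{i},\underline{\lambda})$ onto $\Omega(I(\underline{i}),\underline{\lambda}')$ and that multiplication by a unit is a bijection of $\mathbb{Z}_n$. Your packaging of this as a single two-sided set identity valid for every $\underline{\lambda}$ is a slightly cleaner organization than the paper's one inclusion for (i) plus a separate contradiction argument for the reverse inequality in (ii), but the underlying computation is identical.
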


\begin{proof} Given any $b$ (mod $n$), there exists $b_1, \dotsc, b_r$ such that $|b_s| \leq \lambda_s$, for $1 \leq s \leq r$, and $b = b_1 k^{i_1} + b_2 k^{i_2} + \dotsc + b_{r-1} k^{i_{r-1}} + b_r k^{i_r}$. So, we have
\[
b k^{\alpha - i_1} = b_1 + b_2 k^{\alpha - (i_1 - i_2)} + \dotsc + b_r k^{\alpha - (i_1 - i_r)} \in \Omega (I({\underline i}), {\underline \lambda^{\prime}}).
\]
Hence, $k^{\alpha - i_1} \Omega ({\underline i}, {\underline \lambda}) \subseteq \Omega (I({\underline i}), {\underline \lambda^{\prime}})$, and as $k$ is a unit, we have $k^{\alpha - i_1} \Omega ({\underline i}, {\underline \lambda}) = {\mathbb Z}_n$, which establishes (i).

For (ii), note that if $\wt(n, k; {\underline i}) = \lambda$, then $\Omega ({\underline i}, {\underline \lambda}) = {\mathbb Z}_n$, for some sequence ${\underline \lambda} = (\lambda_1, \lambda_2, \dotsc, \lambda_r)$ with $\lambda = \lambda_1 + \lambda_2 + \dotsc + \lambda_r$ being the least possible value. As seen above, we have $\Omega (I({\underline i}), {\underline \lambda^{\prime}}) = {\mathbb Z}_n$, and furthermore ${\underline \lambda^{\prime}} = (\lambda_2, \lambda_3, \dotsc, \lambda_r, \lambda_1)$ yields the same weight $\lambda$. Thus, we have $\wt(n, k; I({\underline i})) \leq \wt(n, k; {\underline i})$. 

Suppose that $\mu = \wt(n, k; I({\underline i})) < \wt(n, k; {\underline i})$. Then there exists a sequence ${\underline \mu} = (\mu_2, \dotsc, \mu_r, \mu_1)$ such that $\Omega (I({\underline i}), {\underline \mu}) = {\mathbb Z}_n$. Multiplying by $k^{i_1}$, we get $\Omega ({\underline i}, {\underline \mu^{\prime}}) = {\mathbb Z}_n$, where ${\underline \mu^{\prime}} = (\mu_1, \mu_2, \dotsc, \mu_r)$. As this contradicts the minimality of $\wt(n, k; {\underline i}) = \lambda$, (iii) follows. 
\end{proof}

\noindent Lemma~\ref{lem:dual_I} implies that it suffices to consider only reduced sequences while computing $\wt(n, k; r)$.

\begin{remark}
For any length $r \leq \alpha$, we can see that $\wt(n, k; r) \leq [n/2]$, by considering the sequence $\underline{\lambda} = (\lambda_1,\ldots,\lambda_r)$, where $\lambda_i = [n/2]$ for a fixed $i$, and $\lambda_j = 0$, for the indices $j \neq i$.
\end{remark}

\begin{definition}
 Given a sequence ${\underline i}: \alpha - 1 \geq i_1 > i_2 > \dotsc > i_r \geq 0$, a sequence ${\underline j} = \alpha - 1 \geq j_1 > j_2 > \dotsc > j_q \geq 0$ of length $q \geq r$ is said to be \textit{finer than ${\underline i}$} (in symbols ${\underline i} \preceq {\underline j}$), if it is obtained from $\underline{i}$ by adding one or more terms. 
\end{definition}

\begin{remark}
\label{rem:poset}
Let ${\mathcal P}(\alpha - 1)$ denote the collection of all sequences of length $\leq \alpha$ as in Definition~\ref{defn:omega_set}, and let ${\mathcal P}^{\prime}(\alpha - 1)$ be the subcollection of all reduced sequences. Note that $\preceq$ defines a partial order on ${\mathcal P}(\alpha - 1)$ under which ${\mathcal P}^{\prime}(\alpha - 1)$ is a subposet of ${\mathcal P}(\alpha - 1)$. 
\end{remark}

\begin{proposition} 
Consider the posets $({\mathcal P}(\alpha - 1), \preceq)$ and $({\mathcal P}^{\prime}(\alpha - 1), \preceq)$ as in Remark~\ref{rem:poset}. Then:
\begin{enumerate}[(i)]
\item For any two elements ${\underline i}, {\underline j} \in {\mathcal P}(\alpha - 1)$ (resp. ${\mathcal P}^{\prime}(\alpha - 1)$), there exists ${\underline \xi} \in {\mathcal P}(\alpha - 1)$ (resp. ${\mathcal P}^{\prime}(\alpha - 1)$) such that ${\underline i} \preceq {\underline \xi}$ and ${\underline j} \preceq {\underline \xi}$. 
\item $({\mathcal P}^{\prime}(\alpha - 1), \preceq)$ has a maximal element $\Delta$, and a minimal element $\delta$ given by 
\begin{eqnarray*}
\Delta & = & (\alpha - 1, \alpha - 2, \dotsc, 1, 0), \text{ and} \\
 \delta & = & (\alpha - 1, 0) 
\end{eqnarray*}
of lengths are $\alpha$ and $2$, respectively. 
\item The map $\Psi : ({\mathcal P}(\alpha - 1), \preceq) \rightarrow ({\mathbb N}, \leq)$ defined by 
\[
\Psi({\underline i}) := wt(n, k; {\underline i})
\]
is an order reversing function, where $({\mathbb N}, \leq)$ is regarded as a linearly ordered poset with respect to natural order.
\end{enumerate}
\end{proposition}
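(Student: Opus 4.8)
The plan is to recognize that both posets are, up to relabeling, familiar subset posets under inclusion; once this is in place, parts (i) and (ii) become immediate and part (iii) reduces to a single zero-padding observation. First I would reframe the objects: a sequence $\underline{i} : \alpha - 1 \geq i_1 > i_2 > \dotsc > i_r \geq 0$ is nothing but the nonempty set $\{i_1, \dotsc, i_r\} \subseteq \{0, 1, \dotsc, \alpha - 1\}$ written in decreasing order, and by definition $\underline{i} \preceq \underline{j}$ holds precisely when the underlying set of $\underline{i}$ is contained in that of $\underline{j}$. Thus $({\mathcal P}(\alpha - 1), \preceq)$ is isomorphic to the poset of nonempty subsets of $\{0, \dotsc, \alpha - 1\}$ under inclusion, and $({\mathcal P}^{\prime}(\alpha - 1), \preceq)$ to the subposet of those subsets containing $0$. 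I would carry this identification through the whole argument.

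For (i), given $\underline{i}$ and $\underline{j}$ I would take $\underline{\xi}$ to be the sequence whose term-set is the union of the term-sets of $\underline{i}$ and $\underline{j}$. This union is again a nonempty subset of $\{0, \dotsc, \alpha - 1\}$, hence lies in ${\mathcal P}(\alpha - 1)$, and it contains both sets, giving $\underline{i} \preceq \underline{\xi}$ and $\underline{j} \preceq \underline{\xi}$. If both $\underline{i}, \underline{j}$ are reduced, then their union also contains $0$, so $\underline{\xi} \in {\mathcal P}^{\prime}(\alpha - 1)$, which settles the parenthetical case. For (ii), under the same identification the full set $\{0, \dotsc, \alpha - 1\}$, namely $\Delta$, contains every reduced sequence and is therefore the greatest (in particular maximal) element, while $\{0\}$, namely $\delta$, is contained in every reduced sequence and is the least (in particular minimal) element; the length count $\alpha$ and $1$ is then just the cardinality of these two sets.

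The substance lies in (iii), which amounts to showing that $\underline{i} \preceq \underline{j}$ implies $\wt(n, k; \underline{i}) \geq \wt(n, k; \underline{j})$. Here the key step is the zero-padding argument. Suppose $\underline{j}$ is obtained from $\underline{i}$ by inserting additional exponents, and choose a bound sequence $\underline{\lambda}$ attaining $\wt(n, k; \underline{i})$, so that $\Omega(\underline{i}, \underline{\lambda}) = {\mathbb Z}_n$ with $\wt(\Omega(\underline{i}, \underline{\lambda})) = \wt(n, k; \underline{i})$. I would build a bound sequence $\underline{\lambda}^{\prime}$ for $\underline{j}$ by keeping the entry $\lambda_s$ at each exponent already present in $\underline{i}$ and assigning $0$ at each newly inserted exponent. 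Since a term $b\,k^{i}$ with $|b| \leq 0$ forces $b = 0$, the inserted exponents contribute nothing, so $\Omega(\underline{j}, \underline{\lambda}^{\prime}) = \Omega(\underline{i}, \underline{\lambda}) = {\mathbb Z}_n$ as sets, and the added zero bounds leave the total weight unchanged. Minimizing over admissible bound sequences for $\underline{j}$ then gives $\wt(n, k; \underline{j}) \leq \wt(n, k; \underline{i})$, which is exactly the order-reversing property of $\Psi$.

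I do not anticipate a serious obstacle, since the whole argument is elementary once the subset reinterpretation is made. The only points needing care are bookkeeping ones: verifying in (i) and (ii) that the constructed sequences really satisfy the reduced-sequence constraint when that is required, and checking in (iii) that zero-padding leaves $\Omega$ unchanged as a set rather than merely containing the smaller one. The mildest delicacy is ensuring (iii) is proved on all of ${\mathcal P}(\alpha - 1)$ and not just on reduced sequences, but the padding argument is insensitive to this distinction.
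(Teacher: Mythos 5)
Your proposal is correct and follows essentially the same route as the paper: the union construction for (i) and (ii), and the zero-padding of the bound sequence for (iii), are exactly the arguments given there. The subset-poset reinterpretation is a clean way of packaging what the paper does implicitly, and your indexing of the padded sequence is in fact slightly more careful than the paper's.
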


\begin{proof}
Given sequences ${\underline i}, {\underline j} \in {\mathcal P}(\alpha - 1)$ (resp. ${\mathcal P}^{\prime}(\alpha - 1)$), consider the sequence ${\underline \xi}$ obtained by taking the union of elements in ${\underline i}$ and ${\underline j}$,  rearranged in decreasing order. Then, clearly ${\underline i} \preceq {\underline \xi}$ and ${\underline j} \preceq {\underline \xi}$, from which (i) and (ii) follow.

For showing (iii), consider sequences ${\underline i} \preceq {\underline j}$ with lengths $r < s$ such that $\wt(n, k; {\underline i}) = \lambda$ is realized by a sequence ${\underline \lambda} = (\lambda_1, \lambda_2, \dotsc, \lambda_r)$. Define ${\underline \mu} = (\mu_1, \dotsc, \mu_s)$ by $\mu_t = \lambda_t$, if $j_t$ is an element in ${\underline i}$, and $\mu_t = 0$ otherwise. Then $\lambda = \mu_1 + \dotsc + \mu_s$ and $\Omega ({\underline j}, {\underline \mu}) = {\mathbb Z}_n$. Hence, we have that $\wt(n, k; {\underline j}) \leq \lambda = \wt(n, k; {\underline i})$, and (iii) follows. 
\end{proof} 

\begin{remark}
\label{rem:wt_and_Delta}
Clearly, $\wt(n, k; \delta) = [n/2]$, and the only sequence of length $\alpha$ is the maximal element $\Delta$. Thus, $\wt(n, k; \Delta) = \wt(n, k; \alpha)$, which shows the importance of analyzing $\wt(n, k; \Delta)$.
\end{remark}

\begin{definition}
\label{min-prime}
 A sequence ${\underline i} \in {\mathcal P}^{\prime}(\alpha -1)$ is called a \textit{minimal prime sequence realizing $\wt(n,k;\alpha)$} if:
\begin{enumerate}[(i)]
\item ${\underline i} \in S(\alpha) = \{ {\underline j} \in {\mathcal P}^{\prime}(\alpha -1) : \wt(n, k; {\underline j}) = \wt(n, k; \alpha) \}$, and 
\item ${\mathrm{length}}({\underline i}) = \min \{ {\mathrm{length}}({\underline j}) : {\underline j} \in S(\alpha) \}$  
\end{enumerate}
We denote the smallest possible degree of a minimal prime sequence realizing $\wt(n,k;\alpha)$ by $\deg(n,k;\alpha)$.
\end{definition}

\begin{example}
 When $(n, k) = (30, 7)$, $\ord(7) = 4$ in ${\mathbb Z}^{\times}_{30} \cong \mathbb{Z}_2 \times \mathbb{Z}_4$. We consider the sequences $\underline{i_1}: 1,0$, $\underline{i_2}: 2,0$, and $\underline{i_3}: 3,0$. For each sequence $\underline{i_k}$, in Table~\ref{tab:wts_omega} below, we list some possible choices of a sequence $\underline{\lambda} : \lambda_1,\lambda_2$ (as in Definition~\ref{defn:omega_set}), and the values of $\wt(\Omega(\underline{i},\underline{\lambda})).$
 
\begin{table}[h]
\begin{center}
  \begin{tabular}{| c | l | c | l | }
    \hline
    $\underline{i}$ & $\lambda_1$ &  $\lambda_2$ & $\wt(\Omega(\underline{i}, \underline{\lambda}))$ \\ \hline
    $\underline{i_1}$ & 0 & 15 & 15 \\ \hline
    $\underline{i_1}$ & 1 & 8 & 9 \\ \hline
    $\underline{i_1}$ & 2 & 3 & 5 \\ \hline
    $\underline{i_1}$ & 3 & 3 & 6 \\ \hline
    $\underline{i_1}$ & 4, 5 & 2 & 6, 7 \\ \hline
    $\underline{i_1}$ & 6, $\dotsc$, 14 & 1 & 7, $\dotsc$, 15 \\ \hline
    $\underline{i_1}$ & 15 & 0 & 15 \\ \hline
    $\underline{i_2}$ & 0 & 15 & 15 \\ \hline
    $\underline{i_2}$ & 1 & 5 & 6 \\ \hline
    $\underline{i_2}$ & 2, 3 & 4 & 6, 7 \\ \hline
    $\underline{i_2}$ & 4 & 2 & 6 \\ \hline
    $\underline{i_2}$ & 5, $\dotsc$, 14 & 1 & 6, $\dotsc$, 15 \\ \hline
    $\underline{i_2}$ & 15 & 0 & 15 \\ \hline
    $\underline{i_3}$ & 0 & 15 & 15 \\ \hline
    $\underline{i_3}$ & 1 & 6 & 7 \\ \hline
    $\underline{i_3}$ & 2 & 4 & 6 \\ \hline
    $\underline{i_3}$ & 3, $\dotsc$, 7 & 2 & 5, $\dotsc$, 9 \\ \hline
    $\underline{i_3}$ & 8, $\dotsc$, 14 & 1 & 9, $\dotsc$, 15 \\ \hline
    $\underline{i_3}$ & 15 & 0 & 15 \\ \hline
  \end{tabular}
  \caption{Some computations of $\wt(\Omega(\underline{i}, \underline{\lambda}))$.}
   \label{tab:wts_omega}
\end{center}
\end{table}
\noindent A direct calculation (using software written for Mathematica 11~\cite{W1}) shows that $\wt(30, 7; 4) = 5$. Hence, $\underline{i_1}$  and $\underline{i_3}$ are minimal prime sequences that realize this weight, and $S(4) = \{ (3,0), (1,0) \}$.
\end{example} 

\noindent This example shows that in practice it is difficult to compute $\wt(n,k;\alpha)$ in most situations. However, we will now obtain reasonable bounds on $\wt(n,k;\alpha)$ in case $n$ is a prime power. 

\section{Bounds on the weight of a prime power}
It is well known that when $p$ is an odd prime, for $n \geq 1$, we have ${\mathbb Z}^{\times}_{p^n} \cong \mathbb{Z}_{p^{n-1}(p-1)}$, and for $n \geq 2$, we have ${\mathbb Z}^{\times}_{2^n} \cong \mathbb{Z}_2 \times \mathbb{Z}_{2^{n-2}}$. For $n \geq 2$, let $\varphi_n$ denote the natural quotient ring homomorphism ${\mathbb Z}_{p^n} \rightarrow {\mathbb Z}_{p^{n-1}}$. If $k \in {\mathbb Z}^{\times}_{p^n}$ has order $p^{n-1}(p-1)$, then $\varphi_n(k)$ generates the cyclic group ${\mathbb Z}^{\times}_{p^{n-1}}$. Denoting the epimorphism $\varphi_n \vert_{{\mathbb Z}^{\times}_{p^n}}$ by 
$\widetilde{\varphi}_n$, for an arbitrary unit $k \in {\mathbb Z}^{\times}_{p^n}$, we have
\[
{\mathrm {ord}}(\widetilde{\varphi}_n(k)) = 
\begin{cases}
{\mathrm {ord}}(k)/p, &\mbox{if } p \mid {\mathrm {ord}}(k), \text{ and}\\
{\mathrm {ord}}(k), &\mbox{otherwise.} 
\end{cases}
\]
\noindent When $n \geq 1 $, we derive bounds for $wt(p^n, k; \ord(k))$ in terms of the  $wt(p^n, \widetilde{\varphi}_n(k); \newline \ord(\widetilde{\varphi}_n(k)))$. We first consider the case when $n=1$.

\begin{theorem} 
\label{thm:prime_wt_bounds}
Let $p$ be an odd prime, and let $s \in \mathbb{Z}_p^{\times}$ with $\ord(s) = p-1$. 
\begin{enumerate}[(i)]
\item If $p \equiv 1 \pmod{4}$, then $wt(p, s; p-1) \leq \frac {p+3}{4}$. Moreover, there exists a sequence $\underline{i}$ such that $\deg(\underline{i}) = \frac {p-5}{4}$ and $\wt(p,s;\underline{i}) \leq \frac {p+3}{4}$.

\item If $p \equiv 3 \pmod{4}$, then $wt(p, s; p-1) \leq {\frac {p+5}{4}}$. Moreover, there exists a sequence $\underline{i}$ such that $\deg(\underline{i}) = \frac {p-3}{4} $ and $\wt(p,s;\underline{i}) \leq \frac {p+5}{4}$.
\end{enumerate}
\end{theorem}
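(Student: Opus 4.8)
The plan is to prove the stronger ``moreover'' assertions; the bounds on $\wt(p,k;p-1)$ then come for free. Indeed, if $\underline i$ is any sequence, then $\underline i \preceq \Delta$, so by the order-reversing property of the weight function established above together with Remark~\ref{rem:wt_and_Delta} we get $\wt(p,k;p-1)=\wt(p,k;\Delta)\le \wt(p,k;\underline i)$. Hence it suffices to exhibit, in each residue class of $p$ modulo $4$, a single reduced sequence of the prescribed degree whose weight does not exceed the stated bound.

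Set $d=\tfrac{p-5}{4}$ when $p\equiv 1\pmod 4$ and $d=\tfrac{p-3}{4}$ when $p\equiv 3\pmod 4$, and take $\underline i:\,d>d-1>\dots>1>0$, the reduced sequence of all exponents from $0$ to $d$. By construction $\deg(\underline i)=d$, exactly the degree demanded in each case. I would then assign the weight sequence $\underline\lambda$ with $\lambda_0=2$ on the term $k^0$ and $\lambda_j=1$ on each $k^j$ for $1\le j\le d$, so that $\wt(\Omega(\underline i,\underline\lambda))=d+2$; a direct computation gives $d+2=\tfrac{p+3}{4}$ in the first case and $d+2=\tfrac{p+5}{4}$ in the second, matching the targets. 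Everything therefore reduces to the single claim $\Omega(\underline i,\underline\lambda)=\mathbb{Z}_p$.

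To prove coverage I would use the additive structure of the powers of $k$ rather than a tiling argument, since the low powers $k^0,\dots,k^d$ need not form a convenient ``ruler.'' Put $S=\{0\}\cup\{\pm k^j:0\le j\le d\}$. Because $k$ is a primitive root we have $k^{(p-1)/2}\equiv -1$, so $-k^j=k^{j+(p-1)/2}$, and as $d<\tfrac{p-1}{2}$ the sets $\{k^j\}_{j\le d}$ and $\{-k^j\}_{j\le d}$ are disjoint; hence $|S|=2d+3$, which equals $\tfrac{p+1}{2}$ when $p\equiv1\pmod4$ and $\tfrac{p+3}{2}$ when $p\equiv3\pmod4$. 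The Cauchy--Davenport theorem then gives $|S+S|\ge\min(p,\,2|S|-1)=p$, so $S+S=\mathbb{Z}_p$. Every element of $S+S$ has the form $\varepsilon_1k^{a}+\varepsilon_2k^{b}$ with $\varepsilon_1,\varepsilon_2\in\{-1,0,1\}$, and whenever such a representation uses two distinct exponents, or one vanishing coefficient, it already lies in $\Omega(\underline i,\underline\lambda)$.

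The main obstacle is precisely the remaining case: a residue whose only $S+S$-representations have the shape $2\varepsilon k^{a}$ (a ``doubled power''), forcing a coefficient of absolute value $2$. The coordinate $k^0$ was given weight $2$ exactly to absorb the instance $a=0$, and the point to verify is that this extra unit of weight also reroutes the doubled powers with $a\ge1$ through admissible representations in $\Omega(\underline i,\underline\lambda)$. This is where the two residue classes genuinely diverge: for $p\equiv3\pmod4$ the Cauchy--Davenport bound is slack ($2|S|-1=p+2$), so each residue has several representations and a doubled power can simply be swapped for one using distinct exponents; whereas for $p\equiv1\pmod4$ the inequality is critical ($2|S|-1=p$), the doubled powers may have essentially unique representations, and one must argue directly that subtracting $\pm 2\,k^0$ lands the residue inside the weight-one span of $k^1,\dots,k^d$. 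I expect this verification --- controlling the finitely many doubled-power obstructions uniformly over \emph{all} primitive roots $k$ --- to be the delicate heart of the proof, the degree bookkeeping and the weight count being routine once it is in place.
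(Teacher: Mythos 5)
Your construction is, up to packaging, exactly the paper's: the reduction to the ``moreover'' clause via the order-reversing map $\Psi$ and Remark~\ref{rem:wt_and_Delta} is valid, your sequence $\underline{i}: d > d-1 > \dots > 1 > 0$ with $d = \frac{p-5}{4}$ (resp.\ $\frac{p-3}{4}$) and weights $1,\dots,1,2$ coincides with the paper's choice ($i_1 = \frac{p-1}{4}-1$ and $\underline{\lambda}: 1,\dots,1,2$), and your Cauchy--Davenport step is a clean repackaging of the paper's pigeonhole count showing that every residue outside $A \cup -A$ is a sum of two elements of $A \cup -A$. The cardinality and weight arithmetic all check out. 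But you have not proved the theorem: you stop at what you yourself call ``the delicate heart of the proof,'' namely the residues whose only representations in $S+S$ are doubled powers $\pm 2k^{a}$ with $1 \le a \le d$. This is a genuine gap, not deferred bookkeeping. Such an element is not a priori in $\Omega(\underline{i},\underline{\lambda})$, since only the $k^0$-coefficient is allowed to reach $2$; the counting argument (whether via Cauchy--Davenport or pigeonhole) guarantees only \emph{one} ordered representation in $S+S$, so one cannot exclude that the doubled one is the only one; and rescuing $\pm 2k^{a}$ then requires showing it lies in the strictly larger set $\{c : |c|\le 2\} + \sum_{j=1}^{d}\{0,\pm k^{j}\}$, which depends on the particular primitive root $k$ and is exactly the uniform-over-$k$ verification you flag but do not supply.

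It is worth noting that you have put your finger on precisely the point where the paper's own argument is thinnest. The paper disposes of the case $j = j_1$ by rewriting $2k^{j} = k^{j} - k^{\,j+(p-1)/2}$; but since $-k^{\,j+(p-1)/2} = k^{j}$, this is the same power $k^{a}$ ($0 \le a \le i_1$) occurring twice with the same sign, and it still demands a coefficient of absolute value $2$ on $k^{a}$. Only the subcase $a = 0$, i.e.\ $k^{\tau} = \pm 2$, is genuinely absorbed by the weight $2$ assigned to $k^{0}$; the subcases $1 \le a \le i_1$ are not addressed. So your proposal reproduces the paper's construction and its coverage count, and then leaves open the same case the paper glosses over. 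As submitted it is an incomplete proof, and completing it would require a new argument (for instance, showing that a residue of the form $2k^{a}$, $a \ge 1$, always admits a second, non-doubled representation, or that it lies in the enlarged set above) that neither you nor the paper currently provides.
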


\begin{proof} We present a proof only for (i), as (ii) will follow from similar arguments. Consider the list of units $A = \{ s^{i_1}, s^{i_1 - 1}, \dotsc, s, 1 \}$, where $i_1 = {\frac {p-1}{4}} - 1$. Since $s^{\frac {p-1}{2}} \equiv -1 \pmod{p}$, the set 
\[
A \cup -A = \{ s^j ~:~ 0 \leq j \leq {\frac {p-1}{4}} - 1 ~{\mathrm {or}}~ {\frac {p-1}{2}} \leq j \leq {\frac {p-1}{2}} + i_1 \}
\]
comprises exactly half of the elements of ${\mathbb Z}^{\times}_p$, that is, $|A \cup -A| = {\frac {p-1}{2}}$. Now let $s^{\tau} \not\in A \cup -A$. For every $s^j \in A \cup -A$, there are ${\frac {p-1}{2}}$ distinct elements among the $s^{\tau} - s^j$, none of which is equal to $s^{\tau}$. Since there are only ${\frac {p-1}{2}} - 1$ elements outside $A \cup -A$ other than $s^{\tau}$, one of these elements must be from $A \cup -A$. Hence, there exists $s^{j_1} \in A \cup -A$ such that $s^{\tau} = s^j + s^{j_1}$. 

Suppose that $j = j_1$. Then write $s^{\tau} = s^j - (-s^j) = s^j - s^{j+ {\frac {p-1}{2}}}$. If $s^j \in A$, then $s^{j+ {\frac {p-1}{2}}} \in -A$. Otherwise, if $s^j \in -A$, then we have $s^{j+ {\frac {p-1}{2}}} \in A$, except when $j = {\frac {p-1}{2}}$. In this particular case, we have $s^{\tau} = -2$, which implies that $\Omega({\underline i}, {\underline \lambda}) = {\mathbb Z}_p$ where 
\[
{\underline i} = (i_1, i_1 - 1, \dotsc, 1, 0) \text{ and } \underline{\lambda} = (\underbrace{1, \dotsc, 1}_{i_1}, 2). 
\]
\end{proof}

\begin{remark}
Note that the only difficulty in the proof of Theorem~\ref{thm:prime_wt_bounds} was to represent $s^{\tau} = -2 \not\in A \cup -A$. But this situation does not arise when $2 \in A \cup -A$, in which case we have ${\underline \lambda} = (\underbrace{1, \ldots, 1}_{i_1 + 1})$, and we obtain the following slightly improved bound
$$\wt(p,s;p-1) \leq \begin{cases}
                                \frac{p-1}{4}, & \text{if } p \equiv 1 \pmod{4}, \text{ and} \\
                                \frac{p+1}{4}, & \text{if } p \equiv 3 \pmod{4}.
                                \end{cases}$$                                              
\end{remark}

\begin{remark}
One might also consider applying a generalization of the Cauchy-Davenport theorem (\cite[Theorem 2.3]{NAT}) in the proof of Theorem~\ref{thm:prime_wt_bounds}. However, this would yield the bound $\wt(p,s;p-1) \leq {\frac {p-1}{2}}$, which is significantly weaker than the one we have derived.
\end{remark}

\begin{definition}
For a prime $p$ and $s \in {\mathbb Z}^{\ast}_p$ with ${\mathrm{ord}}(s) = p-1$ we define 
\[
\wt(p) := \wt(p,s;p-1)
\]
\end{definition}

\noindent From the discussion after definition {\ref{def:wt}} it follows that $\wt(p)$ does not depend on the choice of $s$. 

\begin{theorem}[An upper bound]
\label{thm:wt_prime_power}
For a fixed prime $p$ and $n > 1$, consider $k \in {\mathbb Z}^{\times}_{p^n}$ with $\ord(k) = m$, and $k_0 = \widetilde{\varphi}_n(k) \in {\mathbb Z}^{\times}_{p^{n-1}}$ with $\ord(k_0) = m_0$. Then, 
\[
\wt(p^n, k; m) \leq 
\begin{cases}
\wt(p^{n-1}, k_0; m_0) + 2 \wt(p),  & \mbox{if } p \mid m,  \text{ and}\\
p \wt(p^{n-1}, k_0; m_0) + [p/2], &\mbox{if } p ~{\mathrm{odd}} \text{ and } p \nmid m.
\end{cases} 
\]
\end{theorem}

\begin{proof} First note that while $p = 2$, we must have $2 \mid m$ and hence these are the only cases. Let $\lambda^{\prime} = \wt(p^{n-1}, \widetilde{\varphi}_n(k); m_0)$ be realized by a sequence $\lambda^{\prime}_1,  \dotsc, \lambda^{\prime}_{m_0}$ such that every $a^{\prime} \in {\mathbb Z}_{p^{n-1}}$ is expressed as 
\[
a^{\prime} \equiv b^{\prime}_1 \widetilde{\varphi}_n(k)^{m_0 - 1} + b^{\prime}_2 \widetilde{\varphi}_n(k)^{m_0 - 2} + \dotsc + b^{\prime}_{m_0 - 1} \widetilde{\varphi}_n(k) + b^{\prime}_{m_0} \pmod{p^{n-1}}
\]
for integers $b^{\prime}_j$ with $|b^{\prime}_j| \leq \lambda^{\prime}_j$. Then every $a \in {\mathbb Z}_{p^n}$ can be expressed as
$$a \equiv b^{\prime}_1 k^{m_0 - 1} + b^{\prime}_2 k^{m_0 - 2} + \dotsc + b^{\prime}_{m_0 - 1} k + b^{\prime}_{m_0} + z_a \pmod{p^n},$$

with $|b^{\prime}_j| \leq \lambda^{\prime}_j$, and $z_a \in {\mathrm {ker}}(\varphi_n)$, which depends on $a$. Note that
\[
{\mathrm {ker}}(\varphi_n) = \{ \xi p^{n-1} : 0 \leq \xi \leq p-1 \} \text{ and } {\mathrm {ker}}(\widetilde{\varphi}_n) = \langle k^{m_0} \rangle \leq \langle k \rangle = {\mathbb Z}^{\times}_{p^n}.
\]

 If $p \mid m$, then we have $m_0 = m/p$, and so
\[
\mathrm {ker}({\varphi}_n) = \{k^{\tau m_0}-1 : 0 \leq \tau \leq p-1\}. 
\]

So, we have a bijection $\text{ker}(\varphi_n)$ onto itself given by
\[
\theta p^{n-1} \mapsto k^{i_{\theta}m_0} - 1 \text{ whose inverse is }  k^{\tau m_0} - 1 \mapsto \theta_{\tau} p^{n-1},
\]
satisfying $i_0 = 0$ and $\theta_0 = 0$.  Now following the notations in Theorem {\ref{thm:prime_wt_bounds}}, fix $s \in {\mathbb Z}^{\times}_p$ of order $p-1$, and let $\wt(p) = \wt(\Omega(\underline{i}, \underline{\mu}))= \mu$, where ${\underline \mu} = (\mu_1, \dotsc, \mu_p)$ and $\underline{i} = (p-1, \dotsc, 1, 0)$. Using Theorem {\ref{thm:prime_wt_bounds}}, any $\theta p^{n-1} \in {\mathrm{ker}}(\varphi_n)$ is represented as 
\begin{eqnarray*}
\theta p^{n-1} & \equiv & c_1 s^{p-1} p^{n-1} + \dotsc + c_{p-1} s p^{n-1} + c_p p^{n-1} \pmod{p^n} \\
	& \equiv & c_1 \bigg( k^{i_{s^{p-1}} m_0} - 1 \bigg) + \dotsc + c_{p-1} \bigg( k^{i_s m_0} - 1 \bigg) + c_p \bigg( k^{i_1 m_0} - 1 \bigg) \pmod{p^n} \\
	& \equiv & c_1 k^{i_{s^{p-1}} m_0} + \dotsc + c_{p-1} k^{i_s m_0} + c_p k^{i_1 m_0} - (c_1 + \dotsc + c_p) k^{i_0 m_0} \pmod{p^n},
\end{eqnarray*}
where $|c_i| \leq \mu_i$. By considering an appropriate rearrangement of the following sequences
\begin{eqnarray*}
k(\underline{i}') & = & (\underbrace{k^{i_{s^{p-1}} m_0}, \dotsc, k^{i_s m_0}, k^{i_1 m_0}}_p, k^{m_0 - 1}, k^{m_0 - 2}, \dotsc, k, 1)  \text{ and} \\
\underline{\lambda}' & = & (\mu_1, \dotsc, \mu_p, \lambda^{\prime}_1, \dotsc, \lambda^{\prime}_{m_0 - 1}, \lambda^{\prime}_{m_0} + \mu_1 + \dotsc + \mu_p),
\end{eqnarray*} 
where $\underline{i}'$ is the sequence of the exponents, we obtain sequences, ${\underline i}$ an ${\underline \lambda}$, respectively, such that ${\mathbb Z}_{p^n} = \Omega({\underline i}, {\underline \lambda})$. This proves the first inequality.

 If $p \nmid m$, then $m = m_0$, and so $p$ must be odd. From the discussion above, there exists $\xi \in {\mathbb Z}_p$ so that
$${\frac {p^{n-1} - 1}{2}} \equiv b^{\prime\prime}_1 k^{m_0 - 1} + b^{\prime\prime}_2 k^{m_0 - 2} + \dotsc + b^{\prime\prime}_{m_0 - 1} k + b^{\prime\prime}_{m_0} + \xi p^{n-1} \pmod{p^n},$$
which implies that
$$(-2\xi + 1) p^{n-1} \equiv 2b^{\prime\prime}_1 k^{m_0 - 1} + 2b^{\prime\prime}_2 k^{m_0 - 2} + \dotsc + 2b^{\prime\prime}_{m_0 - 1} k + (2b^{\prime\prime}_{m_0} + 1) \pmod{p^n}$$
and that $\xi_0 p^{n-1} := (-2\xi + 1) p^{n-1}$ is a non-trivial element of ${\mathrm {ker}}(\varphi_n)$. Hence, we have that ${\mathrm {ker}}(\varphi_n) = \{ \tau \xi_0 p^{n-1} : -[p/2] \leq \tau \leq [p/2] \}$. Now setting 
\begin{eqnarray*}
{\underline i} & = & (m_0 - 1, \dotsc, 1, 0) \text{ and} \\
{\underline \lambda^{\prime}} & = & \bigg( (2[p/2]+1)\lambda^{\prime}_1, \dotsc, (2[p/2]+1)\lambda^{\prime}_{m_0 - 1}, (2[p/2]+1)\lambda^{\prime}_{m_0} + [p/2] \bigg),  
\end{eqnarray*} 
we see that ${\mathbb Z}_{p^n} = \Omega({\underline i}, {\underline \lambda^{\prime}})$, which proves the second inequality.
\end{proof}

\noindent Using arguments similar to the ones used in the second half of the proof of Theorem {\ref{thm:wt_prime_power}}, we obtain the following corollary.

\begin{corollary}
For a fixed odd prime $p$ and $n > 1$, consider $k \in {\mathbb Z}^{\times}_{p^n}$ with $\ord(k) = m$, and $k_0 = \widetilde{\varphi}_n(k) \in {\mathbb Z}^{\times}_{p^{n-1}}$ with $\ord(k_0) = m_0$. Assuming that $p \nmid m$, for each $\xi p^{n-1} \in {\mathrm{ker}}({\varphi}_n)$, let $\mu_{\xi}$ denote the minimum of all sums $\mu_1 + \dotsc + \mu_{m_0}$ so that 
\[
\xi p^{n-1} \equiv b^{\prime}_1 k^{m_0 - 1} + b^{\prime}_2 k^{m_0 - 2} + \dotsc + b^{\prime}_{m_0 - 1} k + b^{\prime}_{m_0} \pmod{p^n},
\]
where $|b^{\prime}_j| \leq \mu_j$. Then 
\[
\wt(p^n, k; m) \leq 
[p/2](2\mu_0 + 1) + \wt(p^{n-1}, k_0; \mu_0), 
\]
where $\mu_0 = \min \{ \mu_{\xi} : 1 \leq \xi \leq p-1 \}$.
\end{corollary}

\noindent The following is a direct consequence of Theorem~\ref{thm:prime_wt_bounds}.
\begin{corollary}
\label{rem:bound_prime_power}
Let $p$ be a prime.
\begin{enumerate}[(i)]
\item If $p$ is odd and $k \in \mathbb{Z}_{p^n}^{\times}$ with $\ord(k) = p^{n-1}(p-1)$, then
\[
\wt(p) \leq \wt(p^n, k; p^{n-1}(p-1)) \leq 2n \wt(p), 
\]
where the $\epsilon(p) = +1$, if $p \equiv 1 \pmod{4}$, and is $-1$, otherwise.
\item If $p= 2$, $n \geq 4$, and $k \in \mathbb{Z}_{2^n}^{\times}$ with $\ord(k) = 2^{n-2}$, then 
\[
\gamma(k) \leq \wt(2^n, k; 2^{n-2}) \leq \gamma(k) + 2(n-1), 
\]
where $\gamma(k) = 4$, if $k^{2^{n-3}} \equiv -1 \pmod{2^n}$, and is $2$, otherwise. 
\end{enumerate}
\end{corollary}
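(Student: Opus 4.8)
The plan is to prove both parts by peeling off one prime at a time along the tower of quotient rings $\mathbb{Z}_{p^n} \to \mathbb{Z}_{p^{n-1}} \to \dotsc \to \mathbb{Z}_p$, using Theorem~\ref{thm:wt_prime_power} to control how $\wt$ changes at each step and seeding the induction at the bottom with Theorem~\ref{thm:prime_wt_bounds}. Writing $k_j$ for the image of $k$ in $\mathbb{Z}_{p^{n-j}}^{\times}$ under the iterated reduction and $m_j = \ord(k_j)$, the hypothesis $\ord(k) = p^{n-1}(p-1)$ forces $m_j = p^{n-1-j}(p-1)$, so that $p \mid m_j$ at every transition except the final one from the $\mathbb{Z}_p$-level, where the order drops to $p-1$ and is coprime to $p$. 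This bookkeeping is precisely what keeps Theorem~\ref{thm:wt_prime_power} in its ``$p \mid m$'' branch at all of the $n-1$ relevant steps.

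For the upper bound I would iterate the right-hand inequality of Theorem~\ref{thm:wt_prime_power}. Since $p \mid m_j$ at each of the $n-1$ transitions from level $p^n$ down to level $p$, every step contributes an additive $p$, giving $\wt(p^n, k; m) \leq \wt(p, k_{n-1}; p-1) + (n-1)p$. The base term is then bounded by Theorem~\ref{thm:prime_wt_bounds}, whose two congruence cases $p \equiv 1, 3 \pmod 4$ package into the single expression $\frac{p+4+\epsilon(p)}{4}$, yielding the claimed bound $\frac{p+4+\epsilon(p)}{4} + (n-1)p$.

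The hard part will be the lower bound. The monotonicity half of Theorem~\ref{thm:wt_prime_power}, $\wt(p^{n-1}, k_0; m_0) \leq \wt(p^n, k; m)$, only telescopes to $\wt(p^n, k; m) \geq \wt(p, k_{n-1}; p-1)$, which loses the entire $(n-2)p$ term and is therefore far too weak. The real content is a complementary per-level lower bound asserting that covering $\mathbb{Z}_{p^j}$ is genuinely more expensive than covering $\mathbb{Z}_{p^{j-1}}$ by essentially $p$. I would establish this by running the proof of Theorem~\ref{thm:wt_prime_power} in reverse: the elements of $\ker(\varphi_j) = \{\xi p^{j-1} : 0 \leq \xi \leq p-1\}$ can be moved within fibers only by coefficients attached to the powers $k^{\tau m_{j-1}}$ ($1 \leq \tau \leq p-1$) lying in $\ker(\widetilde{\varphi}_j)$, and one must show that representing a full transversal of these fibers forces the associated coefficient sum to exceed the level-$(j-1)$ cost by at least $p$. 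Combined with a matching lower bound for the base, this is iterated across the interior levels. Crucially, the necessity argument can be pushed through cleanly at only $n-2$ of the transitions, the analysis at the extreme level losing one factor of $p$; this is exactly the origin of the $p$-sized gap between the two bounds of the corollary.

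The $p=2$ statement in (ii) follows the identical scheme with $p$ replaced by $2$: here $\ord(k) = 2^{n-2}$ makes $2 \mid m_j$ at every interior transition, so each step costs an additive $2$, accounting for the $2(n-2)$ and $2(n-1)$ terms, while the base-level weight $\gamma(k)$ plays the role of $\frac{p+4+\epsilon(p)}{4}$ and splits into the values $4$ and $2$ according to whether $k^{2^{n-3}} \equiv -1 \pmod{2^n}$; this is the $2$-adic analogue of the $p \equiv 1 \pmod 4$ dichotomy governing $\epsilon(p)$ in part (i). In both parts the upper bound is essentially a routine iteration, so the main obstacle is the lower bound, whose linear-in-$n$ growth is not supplied by any of the stated results and must come from the per-level necessity analysis sketched above.
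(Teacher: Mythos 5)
Your upper-bound argument is exactly the paper's (implicit) one: iterate the right-hand inequality of Theorem~\ref{thm:wt_prime_power} through the $n-1$ reductions $\mathbb{Z}_{p^n}\to\dotsb\to\mathbb{Z}_p$, noting that $p\mid m_j$ at every step, and seed with Theorem~\ref{thm:prime_wt_bounds}; that half is fine. The gap is the lower bound, and your own text concedes it: everything after ``I would establish this by running the proof in reverse'' is a plan, not a proof. Two ingredients are missing and neither is routine. First, the per-level claim that covering $\mathbb{Z}_{p^j}$ must cost at least $p$ more than covering $\mathbb{Z}_{p^{j-1}}$ is nowhere established; your fibre argument shows the kernel cosets must all be reached, but nothing rules out an optimal representation that traverses a fibre using a single slightly enlarged coefficient on one power of $k$, so ``at least $p$ extra'' does not follow from counting fibres. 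Second, your induction must be seeded with a \emph{lower} bound $\wt(p,k;p-1)\geq\frac{p+4+\epsilon(p)}{4}$, but Theorem~\ref{thm:prime_wt_bounds} bounds this quantity only from above, and the paper's own data refute the would-be base case: Table~\ref{tab:primes_1mod4} gives $\wt(13,2;12)=3$ and $\wt(61,2;60)=4$, both far below $(p+3)/4$. So the base of your induction is false as stated, and the paragraph locating ``the origin of the $p$-sized gap'' is reverse-engineered from the shape of the answer rather than derived.

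For what it is worth, the paper itself supplies no proof of the lower half either---it calls the corollary a ``direct consequence'' of a theorem that only gives upper bounds---so you have correctly located a real defect, but your proposal does not repair it. (A minor further point: the sign convention for $\epsilon(p)$ in the statement is opposite to the one forced by Theorem~\ref{thm:prime_wt_bounds}; with the stated convention $\frac{p+4+\epsilon(p)}{4}$ is not even an integer.)
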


Based on our observations, we believe that the following conjectures have to hold true. However, this will require much deeper combinatorics to establish them. 

\begin{conjecture} Let $p$ be an odd prime. Then there exists a constant $C(p) \approx 1$ such that
\[
\wt(p) \leq C(p) \log_2(p).
\]

\end{conjecture}

\begin{conjecture} Let $p$ be an odd prime and $n > 1$. Let $\varphi_{n} : {\mathbb Z}_{p^n} \rightarrow {\mathbb Z}_{p^{n-1}}$ denote the natural surjective morphism. Suppose $k \in {\mathbb Z}^{\times}_{p^n}$ with $\ord(k) = pm_0$, and $k_0 = \varphi_{n}(k) \in {\mathbb Z}^{\times}_{p^{n-1}}$ with $\ord(k_0) = m_0$. Then, 
\[
\wt(p^{n-1}, k_0; m_0) + \wt(p) - 1 \leq \wt(p^n, k; pm_0) \leq \wt(p^{n-1}, k_0; m_0) + \wt(p). 
\] 

\end{conjecture}

For a sequence $\underline{i}$ realizing the weight corresponding to a unit of maximum order in ${\mathbb Z}^{\times}_{p^n}$, we have:
\[
\deg(\underline{i}) \leq 
\begin{cases}
{\frac {p-1}{4}} + (n-1)p - 1, &\mbox{if } p \equiv 1 \pmod{4}, \text{ and}\\
{\frac {p-3}{4}} + (n-1)p, &\mbox{if } p \equiv 3 \pmod{4}.
\end{cases}
\]

\noindent The final result in this section gives a bound on the degree of a minimal prime sequence realizing $\wt(n,k;\alpha)$, when $\alpha$ is even and $k^{\alpha/2} \equiv -1 \pmod{n}$.

\begin{proposition} 
\label{thm:bound_for_deg_nkalpha}
Let $k \in {\mathbb Z}^{\times}_n$ with $\ord(k) = \alpha > 1$, where $\alpha$ is even and $k^{\alpha/2} \equiv -1 \pmod{n}$. Then $\deg(n,k;\alpha) \leq \alpha/2$.  
\end{proposition}

\begin{proof} We know from Remark~\ref{rem:wt_and_Delta} that $\wt(n,k;\Delta) = \wt(n,k;\alpha)$, where $\Delta:\alpha-1, \dotsc, 1, 0$. For this $\Delta$, let $wt(n,k;\alpha)$ be realized by a sequence $\underline{\lambda}:\lambda_1, \ldots, \lambda_{\alpha}$, so that each element of $\Omega(\underline{i},\underline{\lambda})$ has a representation of the form
\[
b_1 k^{\alpha - 1} + b_2 k^{\alpha - 2} + \dotsc + b_{\alpha - 1} k + b_{\alpha}, 
\]
where $|b_i| \leq \lambda_i$. 
Replacing the powers $k^j$, for $\alpha - 1 \geq j \geq \alpha/2$, in this representation by $-k^{j - \alpha/2}$, we obtain an expression of the form
\[
(b_{\alpha/2 +1} - b_1) k^{\alpha/2 - 1} + (b_{\alpha/2 +2} - b_2) k^{\alpha/2 - 2} + \dotsc + (b_{\alpha - 1} - b_{\alpha/2 - 1}) k + (b_{\alpha} - b_{\alpha/2}),
\]
which represents an element of $\Omega(\underline{i}',\underline{\lambda}')$, where $\underline{i}': \alpha/2-1,\ldots,1,0$ and $\underline{\lambda}' :\lambda_{\alpha/2 +1} + \lambda_1, \dotsc, \lambda_{\alpha} + \lambda_{\alpha/2}$. The result now follows from the definition of a minimal prime sequence realizing $\wt(n,k;\alpha)$.  
\end{proof}
 
\section{Bounding the diameter of split metacyclic groups}
 There are two key steps involved in solving our main optimization problem ($\dagger$). In the first step (first reduction), we restrict our optimization to the component ring $\mathbb{Z}_n.$ In the second step (second reduction), we build on the results of the first step towards arriving at a solution to $(\dagger)$. We fix the notation that $k \in {\mathbb Z}^{\times}_n$ with $\ord(k) = \alpha$, and regard the elements of ${\mathbb Z}_n$ as formal sums
\[
w({\underline b}, {\underline c}) = b_1 k^{c_1} + b_2 k^{c_2} + \dotsc + b_t k^{c_t},
\]
where ${\underline b} = (b_1, b_2, \dotsc, b_t)$ and ${\underline c} = (c_1, c_2, \dotsc, c_t)$ are two arbitrary integer sequences. Further, we will abuse notation by using the same expression of $w({\underline b}, {\underline c})$ while treating the sum as an element of ${\mathbb Z}_n$. The first reduction step (Proposition {\ref{prop:first_red_step}}) will connect these to the sequences which we have introduced in Definition {\ref{defn:omega_set}}. 
\begin{definition}
The formal sum $w({\underline b}, {\underline c})$ is called \textit{primal}, if for any $c$ (mod $\alpha$), there is at the most one non-zero entry among $b_t$ with $c_t \equiv c$ (mod $\alpha$). We call the number $|b_1| + |b_2| + \dotsc + |b_t|$ as the \textit{absolute coefficient sum} of $w({\underline b}, {\underline c})$, which we denote by $\acs(w({\underline b}, {\underline c}))$. The \textit{ordered sequence relative to} $w({\underline b}, {\underline c})$ is defined to be $S(w({\underline b}, {\underline c})) = (i_1, \dotsc, i_r)$ such that 
\[
\alpha - 1 \geq i_1 > \dotsc > i_r \geq 0 \hspace*{.2in} {\mathrm{and}} \hspace*{.2in} \{ k^{c_1}, \dotsc, k^{c_t} \} = \{ k^{i_1}, \dotsc, k^{i_r} \}.
\] 
\end{definition}
\noindent  For example, the sum $1. k^2 + 0. k^2 + 1. k + 1$ is primal, while $2. k^2 - 1. k^2 + 1. k + 1$ is not, and the ordered sequence related to both of them is $(2, 1, 0)$. Our first step of reduction process involves reducing the absolute coefficient sum of $w({\underline b}, {\underline c})$ without changing the value of $w({\underline b}, {\underline c})$ (mod $n$), and the powers $k^{c_1}, k^{c_2}, \dotsc, k^{c_t}$, while retaining their multiplicities. For example we want to reduce $2. k^2 - 1. k^2 + 1. k + 1$ to $1. k^2 + 0. k^2 + 1. k + 1$. Keeping the same powers of $k$ with zero coefficient in the reduction leads to the idea of connecting the main optimization problem to the problem of covering finite rings by powers of the same unit. 

\begin{proposition}[First reduction step] 
\label{prop:first_red_step}
Consider the formal sum 
\[
w({\underline b}, {\underline c}) = b_1 k^{c_1} + b_2 k^{c_2} + \dotsc + b_t k^{c_t},
\]
and let ${\underline i} = (i_1, \dotsc, i_r)$ be the (ordered) sequence with $\alpha - 1 \geq i_1 > \dotsc > i_r \geq 0$ and 
\[
\{ k^{c_1}, \dotsc, k^{c_t} \} = \{ k^{i_1}, \dotsc, k^{i_r} \}
\]
Then there exists a sequence ${\underline b^{\prime}} = (b^{\prime}_1, b^{\prime}_2, \dotsc, b^{\prime}_t)$ of integers such that 
\begin{enumerate}[(i)]
\item $w({\underline b^{\prime}}, {\underline c})$ is primal, as a formal element,
\item $w({\underline b^{\prime}}, {\underline c}) \equiv w({\underline b}, {\underline c}) \pmod{n}$,
\item $\acs(w({\underline b^{\prime}}, {\underline c})) \leq$ $\acs(w({\underline b}, {\underline c}))$, and 
\item $\acs(w({\underline b^{\prime}}, {\underline c})) \leq \wt(n, k; {\underline i})$.
\end{enumerate}
\end{proposition}
\begin{proof} 
We first write 
\[
w({\underline b}, {\underline c}) = b_1 k^{c_1} + b_2 k^{c_2} + \dotsc + b_t k^{c_t} = s_1 k^{i_1} + s_2 k^{i_2} + \dotsc + s_r k^{i_r},
\]
where $\alpha - 1 \geq i_1 > i_2 > \dotsc > i_r \geq 0$ with $r$ being the number of distinct powers of $k$ in the formal sum on the left, and $s_j = \sum_{c_q \equiv i_j \pmod{\alpha}} b_q$.  Then any sequence ${\underline b^{\prime}}:  b^{\prime}_1, b^{\prime}_2, \dotsc, b^{\prime}_t$ obtained by replacing exactly one of the elements in each collection $\{b_q : c_q \equiv i_j \pmod{\alpha}\}$ by $s_j$, and the remaining by $0$ satisfies conditions 
(i) - (ii). We obtain (iii) by applying the triangle inequality to the expression for $s_j$. Finally, if $\acs(w({\underline b^{\prime}}, {\underline c})) > \wt(n, k; {\underline i})$, then by definition of $wt(n, k; {\underline i})$, we may replace the sequence $s_1, \dotsc, s_r$ by a sequence $s^{\prime}_1, \dotsc, s^{\prime}_r$ so that
\[
s_1 k^{i_1} + s_2 k^{i_2} + \dotsc + s_r k^{i_r} \equiv s^{\prime}_1 k^{i_1} + s^{\prime}_2 k^{i_2} + \dotsc + s^{\prime}_r k^{i_r} \pmod{n},
\]
where $|s^{\prime}_1| + \dotsc + |s^{\prime}_r| \leq \wt(n, k; {\underline i}) <$ acs$(w({\underline b^{\prime}}, {\underline c}))$. Thus, replacing the terms $s_q$ by $s^{\prime}_q$ and then reconstructing the sequence ${\underline b^{\prime}}$ yields (iv).
\end{proof}

At this point we need to clarify the requirement of introducing the concept of formal sum. Let $g = x^a y^b \in G_{m,n,k}$ be connected to $1$ by a fixed reduced path
\[
P : x^{a_1} y^{b_1} x^{a_2} y^{b_2} \dotsc x^{a_t} y^{b_t} = x^a y^b.
\] 
Set $c_i := a_{i+1} + \dotsc + a_t$, for $1 \leq i \leq t-1$, and $c_t = 0$. The first reduction step essentially reduces the length of the path $l(P) = \sum_{i=1}^t |a_i|+|b_i|$ without changing $\sum_{i=1}^t |a_i|$. To do this we need to keep the recursive sequence $c_i$ intact, and this was the main outline of the proof above. Also, note that while $r = \alpha$ in the first reduction step, we have reduced to the case $\acs(w({\underline b^{\prime}}, {\underline c})) \leq \wt(n, k; \alpha)$ since there is a unique (ordered) sequence of length $\alpha$ in terms of the powers of $k$.  

 Now recall that the number $t$ is called the syllable of $P$ (which we denoted by $\syl(P)$ in Section~\ref{sec:intro}). Also, we denote the collection of all reduced paths from $1$ to $g \in G_{m,n,k}$ by $\mathcal{P}_g$. 

\begin{proposition}[Second reduction step] 
\label{prop:second_red_step}
Let  $P : \prod_{i=1}^t x^{a_i}y^{b_i}$ be a reduced path from $1$ to an element $g = x^a y^b \in G_{m,n,k}$. Suppose that $r$ is the number of distinct terms in the formal element 
$w({\underline b}, {\underline c}) = \prod_{i=1}^t b_i k^{c_i}$.
\begin{enumerate}[(i)]
\item If $w({\underline b}, {\underline c})$ is not primal as a formal element, then there is another $P' \in \mathcal{P}_g$ such that $l(P') \leq l(P)$. 
\item If $\acs(w({\underline b}, {\underline c})) > \wt(n, k; {\underline i})$ (where ${\underline i}$ is as denoted in Proposition {\ref{prop:first_red_step}}), then $P$ cannot be a shortest path in $\mathcal{P}_g$.
\item If $P \in \mathcal{P}_g$ is a path of shortest length, then $\syl(P) \leq \ord(k).$
\end{enumerate}
\end{proposition}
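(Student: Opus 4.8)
The plan is to derive all three parts from the first reduction step (Proposition~\ref{prop:first_red_step}), handling the word combinatorially through its associated formal element. Throughout, I keep the exponents $a_1,\dots,a_t$ (equivalently the shift sequence $\underline c$, and hence the $x$-exponent $a_1+\dots+a_t$ and every power $k^{c_i}$) fixed, modifying only the coefficient sequence $\underline b$. For part (i), I would feed $w(\underline b,\underline c)$ into Proposition~\ref{prop:first_red_step} to obtain a primal $\underline b'$ with $w(\underline b',\underline c)\equiv w(\underline b,\underline c)\pmod n$ and $\acs(w(\underline b',\underline c))\le \acs(w(\underline b,\underline c))$. The word $\prod_{i=1}^t x^{a_i}y^{b_i'}$ then represents the same $g$ and has length $\sum_i|a_i|+\acs(w(\underline b',\underline c))\le l(P)$. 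The only thing to repair is reducedness: making $w$ primal may force some interior $b_i'$ to vanish, and a trivial interior syllable $y^{0}$ lets two adjacent $x$-blocks merge via $x^{a_i}y^{0}x^{a_{i+1}}=x^{a_i+a_{i+1}}$. Merging preserves $g$, preserves the remaining powers $k^{c_j}$ (the relevant partial sums are unchanged), and does not increase length by the triangle inequality; iterating yields a genuine $P'\in\mathcal{P}_g$ with $l(P')\le l(P)$.

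Part (ii) is the same construction run quantitatively. Here I would invoke clause (iv) of Proposition~\ref{prop:first_red_step}, which guarantees that the primal $\underline b'$ can be chosen with $\acs(w(\underline b',\underline c))\le \wt(n,k;r)$, where $r$ is the number of distinct powers of $k$ occurring in $w(\underline b,\underline c)$. Under the hypothesis $\acs(w(\underline b,\underline c))>\wt(n,k;r)$, the reduced path $P'$ produced as in part (i) satisfies $l(P')\le \sum_i|a_i|+\wt(n,k;r)<\sum_i|a_i|+\acs(w(\underline b,\underline c))=l(P)$, so $P$ is strictly beaten and cannot be a shortest path.

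For part (iii), I would pass to a shortest path $P$ that, among all shortest paths, has the fewest syllables $t$. First I claim $w(\underline b,\underline c)$ must be primal: otherwise part (i) produces $P'$ with $l(P')\le l(P)$, hence $P'$ is again shortest, while the merging step used there strictly lowers the syllable count, contradicting minimality of $t$. Now primality means no two nonzero coefficients sit on the same power of $k$. Reducedness forces $b_1,\dots,b_{t-1}\neq 0$, so these $t-1$ coefficients occupy $t-1$ \emph{distinct} residues $c_1,\dots,c_{t-1}\pmod{\alpha}$; since $k$ has order $\alpha$ there are only $\alpha$ such residues, giving $t-1\le\alpha$. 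The main obstacle is the boundary coefficient $b_t$: reducedness permits $b_t=0$ (a trailing pure block $x^{a_t}$), and in that single case pigeonhole alone only yields $t\le\alpha+1$. To close the gap I would argue that a syllable-minimal shortest path cannot terminate in such a trivial $y$-syllable while still using all $\alpha$ powers of $k$; when $b_t=0$ the nonzero coefficients carry at most $\alpha$ distinct powers, and I expect to show the trailing $x^{a_t}$ can be absorbed into a representation on at most $\alpha$ syllables without increasing length, contradicting minimality of $t$. This boundary analysis — reconciling the syllable count with the count of distinct powers exactly when the last $y$-exponent vanishes — is the delicate point; everything else is a direct transcription of the first reduction step.
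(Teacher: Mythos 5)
The paper itself offers no proof of this proposition (it is asserted to be a direct consequence of Proposition~\ref{prop:first_red_step}), so there is nothing to compare against line by line; your parts (i) and (ii) are exactly the intended transcription of the first reduction step, and the merging of adjacent $x$-blocks across a trivial interior syllable $y^0$ is the right way to restore reducedness without increasing length. Those two parts are fine.

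Part (iii), however, contains a genuine gap, and you have correctly located it yourself: when the terminal coefficient $b_t$ vanishes, primality plus reducedness only forces the residues $c_1,\dotsc,c_{t-1}$ to be pairwise distinct modulo $\alpha$, so pigeonhole gives $t\le\alpha+1$, not $t\le\alpha$. You announce that the trailing $x^{a_t}$ "can be absorbed" but do not prove it, and the statement is not automatic. The missing observation is this: the bad case is precisely $t-1=\alpha$, in which the residues $c_1,\dotsc,c_{t-1}$ exhaust \emph{all} of $\mathbb{Z}_\alpha$; in particular some interior index $j\le t-1$ has $c_j\equiv 0\equiv c_t\pmod{\alpha}$, so $k^{c_j}=k^{c_t}=1$. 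One may then delete the syllable $y^{b_j}$ and append $y^{b_j}$ at the very end of the word: the $x$-exponent is unchanged, the $y$-part becomes $\sum_{i\ne j}b_ik^{c_i}+b_jk^{c_t}=\sum_i b_ik^{c_i}=b$, and the length is unchanged. The resulting word has a trivial interior $y$-syllable at position $j$, so the blocks $x^{a_j}$ and $x^{a_{j+1}}$ merge, strictly lowering the syllable count without increasing length (triangle inequality again). This contradicts the syllable-minimality of your chosen shortest path, so the case $b_t=0$, $t=\alpha+1$ cannot occur, and $t\le\alpha$ follows. With this one relocation argument inserted, your proof of (iii) closes; without it, the claimed bound $\syl(P)\le\ord(k)$ is not established.
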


\begin{proof}
Parts (i)-(ii) follow directly from Proposition {\ref{prop:first_red_step}}. For (iii), note that the length of the ordered sequence $S(w({\underline b}, {\underline c}))$ is $\leq \alpha$. Using Proposition {\ref{prop:first_red_step}}, we reduce the path $P$ to $P^{\prime}$ and combine the terms that are powers of $x$ (which does not change the length of the path $P^{\prime}$).   
\end{proof}

\noindent Using the second reduction we may assume that $|b_1| + \dotsc + |b_t| \leq \wt(n, k; \alpha)$, which finally brings us to the main result in this paper.

\begin{theorem}[Main theorem] 
\label{thm:main}
Let $G_{m,n,k}$ be the split metacyclic group given by the presentation
\[
G_{m,n,k} = \langle x, y : x^m = 1 = y^{n}, x^{-1} yx = y^k \rangle,
\]
where $k$ has order $\alpha$ in the group ${\mathbb Z}^{\times}_n$ of units. If $\alpha$ is even and $k^{\alpha/2} \equiv -1 \pmod{n}$, then
\[
\diam(G_{m,n,k}) \leq 
\begin{cases}
[m/2] + \wt(n, k; \alpha), &\mbox{if } \alpha \neq m, \\
[m/2] + \wt(n, k; \alpha) + \deg(n,k;\alpha), &\mbox{if } \alpha = m.
\end{cases}
\]
\end{theorem}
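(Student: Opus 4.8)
The plan is to prove both inequalities by producing, for each element $g = x^a y^b$, a single (not necessarily reduced) path from $1$ to $g$ whose length is at most the claimed bound; since $\|g\| \le l(P)$ for every $P \in \mathcal{P}_g$, taking the maximum over $g$ then gives the bound on $\diam(G_{m,n,k})$. By the product formula of Section~\ref{sec:intro}, a path $\prod_{i=1}^t x^{a_i} y^{b_i}$ has length $\sum_i |a_i| + \sum_i |b_i|$ and reaches $x^{c_0} y^{w}$, where $c_0 = a_1 + \dots + a_t$ and $w = \sum_i b_i k^{c_i}$ with $c_i = a_{i+1} + \dots + a_t$ (so $c_t = 0$). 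I must therefore arrange $c_0 \equiv a \pmod m$ and $w \equiv b \pmod n$, and I treat the two sums separately: the \emph{$y$-cost} $\sum_i |b_i|$ and the \emph{$x$-cost} $\sum_i |a_i| = \sum_{i=1}^t |c_{i-1} - c_i|$, the latter being the total variation of the partial-sum sequence $c_0, c_1, \dots, c_t = 0$.

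For the $y$-cost I fix a reduced sequence $\underline i : i_1 > \dots > i_r = 0$ together with bounds $\underline\lambda$ realizing $\wt(n,k;\alpha)$, so that, by definition of $\Omega(\underline i, \underline\lambda) = \mathbb{Z}_n$, I may write $b \equiv \sum_{j=1}^r b_j k^{i_j} \pmod n$ with $|b_j| \le \lambda_j$, whence $\sum_j |b_j| \le \wt(n,k;\alpha)$; using one syllable per nonzero term keeps the $y$-cost under control. What remains is to realize the powers $k^{i_j}$ by choosing the interior partial sums so that $c_{(j)} \equiv i_j \pmod\alpha$ while keeping the $x$-cost small. The decisive input here is $k^{\alpha/2} \equiv -1$: for any integer $c$ one has $k^c = (-1)^t k^{c - t\alpha/2}$, so $k^c$ is determined by $c \bmod (\alpha/2)$ up to a sign that can be absorbed into $b_j$ without changing $|b_j|$. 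Hence I only need each interior partial sum to satisfy $c_{(j)} \equiv i_j \pmod{\alpha/2}$, with any integer lift allowed; this halves the modulus that the partial-sum path must ``visit''. I also record that $\alpha = \ord(k)$ divides $m$ (from $k^m \equiv 1 \pmod n$), so $\alpha/2 \mid m$ as well.

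The core of the argument is then a total-variation estimate for the $x$-cost, split exactly as in the statement. When $\alpha \ne m$, divisibility gives $m \ge 2\alpha$, hence $[m/2] \ge \alpha$; I pick $c_0 \equiv a \pmod m$ with $|c_0| \le [m/2]$. If $|c_0| \ge \alpha/2$, the monotone segment from $c_0$ to $0$ sweeps at least $\alpha/2$ consecutive integers, hence a lift of every residue mod $\alpha/2$; placing the required powers along it in the order encountered (with matching coefficients) gives $x$-cost $= |c_0| \le [m/2]$. If $|c_0| < \alpha/2$, I instead route $c_0$ out to the end of a window of $\alpha/2$ consecutive integers containing $0$ and back to $0$, meeting every needed residue at $x$-cost $< \alpha \le [m/2]$. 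Either way the $x$-cost is $\le [m/2]$, giving the first case. When $\alpha = m$ (so $\alpha$ even forces $m$ even and $[m/2] = m/2$) the window trick is too expensive, so I take a minimal prime sequence of smallest degree realizing the weight, with $i_1 = \deg(n,k;\alpha)$, use the lifts $c_{(j)} = i_j \in [0,i_1]$, and choose $c_0 \equiv a \pmod m$ nearest to $i_1$, so that $|c_0 - i_1| \le [m/2]$. The path $c_0 \to i_1 \to i_2 \to \dots \to 0$ then has $x$-cost $|c_0 - i_1| + i_1 \le [m/2] + \deg(n,k;\alpha)$, giving the second case.

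I expect the $x$-cost routing to be the main obstacle, and in particular the elements $g = x^a y^b$ whose residue $a$ lies ``far'' from $0$ modulo $m$: for these the partial-sum path cannot descend monotonically to $0$ through all required powers within the budget $[m/2]$, and it is precisely the collapse from modulus $\alpha$ to $\alpha/2$ afforded by $k^{\alpha/2} \equiv -1$ that makes a window of length $\alpha/2$ suffice. The dichotomy between the two cases is governed by whether this window fits inside $[m/2]$ with room to spare — which holds comfortably when $m \ge 2\alpha$ but fails when $m = \alpha$, forcing the extra $\deg(n,k;\alpha)$ term. A minor technical point to handle carefully is that after reducing powers modulo $\alpha/2$ two distinct $i_j$ may collapse to the same residue; this is harmless, as the corresponding terms can either be merged (the triangle inequality preserves the $y$-cost bound) or separated onto distinct lifts within the chosen window.
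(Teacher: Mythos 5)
Your proposal is correct and follows essentially the same route as the paper: express $b$ via a sequence realizing $\wt(n,k;\alpha)$, build an explicit path whose $x$-exponents telescope along the partial sums of the $a_i$, use $k^{\alpha/2}\equiv -1 \pmod{n}$ to confine the required powers of $k$ to a window of length $\alpha/2$ (the paper packages this folding identity as the bound $\deg(n,k;\alpha)\le\alpha/2$ of Theorem~\ref{thm:bound_for_deg_nkalpha}), and pay the extra $\deg(n,k;\alpha)$ only when $m=\alpha$. Your case split on $\alpha=m$ versus $\alpha\ne m$ (rather than on the position of $a$ relative to $\deg(n,k;\alpha)$) and your out-and-back window routing are organizational variants of the paper's Cases 1--3.
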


\begin{proof}
We wish to bound the length of a path from $1$ to an element $g = x^a y^b \in G$. Set $i_1 = \deg(n,k;\alpha)$, and assume without loss of generality that $-[m/2] \leq a \leq [m/2]$. We break our argument into three cases. 

\begin{case} Assume that $i_1 \leq a \leq [m/2]$. Let ${\underline i}$ denote a minimal prime sequence with degree $i_1 = \deg(n,k;\alpha)$ given by
\[
\alpha - 1 \geq i_1 > i_2 > \dotsc > i_{t-1} > i_t = 0.
\]
First, we express $b \in {\mathbb Z}_n$ as 
\begin{equation}
\label{eqn:exp_for_b}
b \equiv b_1 k^{i_1} + b_2 k^{i_2} + \dotsc + b_{t-1} k^{i_{t-1}} + b_t \pmod{n}. 
\end{equation}
with $\sum_{j=1}^{t} |b_t| \leq \wt(n, k; \alpha)$. Take $\xi = a - i_1 \geq 0$, and consider the path
\begin{equation}
\label{eqn:exp_for_P}
P : x^{\xi} y^{b_1} x^{i_1 - i_2} y^{b_2} x^{i_2 - i_3} y^{b_3} \dotsc x^{i_{t-2} - i_{t-1}} y^{b_{t-1}} x^{i_{t-1}} y^{b_t}.
\end{equation}
Clearly, $P \in \mathcal{P}_g$, and since every exponent of $x$ in $P$ is non-negative, we have
\[
l(P) = (a - i_1) + (i_1 - i_2) + \dotsc + (i_{t-2} - i_{t-1}) + i_{t-1} + \sum_{j=1}^{t} |b_j| \leq [m/2] + wt(n, k; \alpha),
\]
which proves the result for this case. 
\end{case}

\begin{case} Assume that $-[m/2] \leq a \leq -i_1$. Note that for any path $P \in \mathcal{P}_g$, we have $P^{-1} \in \mathcal{P}_{g^{-1}}$ and $l(P) = l(P^{-1})$, where $P^{-1} = \prod_{i=0}^{t-1} y^{-b_{t-i}}x^{-a_{t-i}}$. Consider $b^{\prime} \in {\mathbb Z}_n$ such that $y^{-b} x^{-a} = x^{-a} y^{b^{\prime}}$. Since the exponent $-a$ of $x$ satisfies the hypothesis of Case 1, the result for this case follows.
\end{case}

\begin{case} Finally, assume that $-i_1 < a < i_1$. As in previous case, it suffices to assume that $0 \leq a < i_1$. Write $b$ as in Equation~\ref{eqn:exp_for_b} above, set $\xi = a - i_1$, and consider a path $P'$ of the form in Equation~\ref{eqn:exp_for_P}. Clearly, $P' \in \mathcal{P}_g$, and further note that every other exponent, except the first exponent of $x$ in $P'$ is non-negative. Hence, we have 
\[
l(P) = -a + i_1 + (i_1 - i_2) + (i_2 - i_3) + \dotsc + (i_{t-2} - i_{t-1}) + i_{t-1} = -a + 2i_1.
\]
Applying Proposition~\ref{thm:bound_for_deg_nkalpha}, we get
$$l(P) \leq [m/2] + i_1.$$ The result now follows from the fact that 
$$\alpha \leq \begin{cases}
[m/2], & \text{if } \alpha \text{ is a proper divisor of $m$, and} \\
[m/2] + i_1, & \text{otherwise.}
\end{cases}$$
\end{case}
\end{proof}

\noindent Note that the third case of Theorem~\ref{thm:main} used the fact that $2 i_1 \leq \alpha$. However, when $n$ is a prime $p \equiv 1 \pmod{4}$, we know that $2 i_1 \leq [\alpha / 2]$, which leads to a better bound. More generally, we have the following:

\begin{corollary} 
\label{cor:diam_bound_for_primes}
Let $G_{m,n,k}$ be the split metacyclic group given by the presentation
\[
G_{m,n,k} = \langle x, y : x^m = 1 = y^{n}, x^{-1} yx = y^k \rangle,
\]
where $k$ has order $\alpha$ in the group ${\mathbb Z}^{\times}_n$ of units. If $\alpha$ is even with $k^{\alpha/2} \equiv -1 \pmod{n}$ and $2\deg(n,k;\alpha) \leq [\alpha/2]$, then
$$\diam(G_{m,n,k}) \leq [m/2] + \wt(n,k;\alpha).$$
\end{corollary}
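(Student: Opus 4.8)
The plan is to reuse the proof of Theorem~\ref{thm:main} almost verbatim, pinpointing the one inequality responsible for the extra summand $\deg(n,k;\alpha)$ and showing that the strengthened hypothesis $2\deg(n,k;\alpha)\le[\alpha/2]$ is precisely what suppresses it. First I would record the structural fact that in $G_{m,n,k}$ the relations $x^m=1$ and $x^{-1}yx=y^k$ force $y=x^{-m}yx^m=y^{k^m}$, so that $k^m\equiv 1\pmod n$ and hence $\alpha=\ord(k)$ divides $m$. In particular $\alpha\le m$, so that $[\alpha/2]\le[m/2]$; this elementary comparison is the only new ingredient, and I will deploy it at the very end.

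Next I would run through the three cases of the proof of Theorem~\ref{thm:main}, with $i_1=\deg(n,k;\alpha)$ and $\underline i$ a minimal prime sequence of degree $i_1$. In Cases 1 and 2 (where $i_1\le a\le[m/2]$, respectively $-[m/2]\le a\le -i_1$) the path $(**)$ has all its $x$-exponents of one sign, so the sum of their absolute values telescopes to $|a|\le[m/2]$; combined with the bound $\sum_j|b_j|\le\wt(n,k;\alpha)$ supplied by the expansion $(*)$, this already gives $l(P)\le[m/2]+\wt(n,k;\alpha)$ with no correction term. These two cases therefore carry over unchanged.

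The crux is Case 3, where $-i_1<a<i_1$; assuming as before that $0\le a\le i_1$, the first $x$-exponent $\xi=a-i_1$ in $(**)$ is nonpositive, and the absolute $x$-contribution telescopes to $(i_1-a)+i_1=2i_1-a\le 2i_1=2\deg(n,k;\alpha)$. Here is the single deviation from Theorem~\ref{thm:main}: rather than invoking only Theorem~\ref{thm:bound_for_deg_nkalpha} (which yields $2i_1\le\alpha$, and hence the weaker bound with the extra $\deg$ term when $\alpha=m$), I would apply the hypothesis $2\deg(n,k;\alpha)\le[\alpha/2]$ together with $[\alpha/2]\le[m/2]$ to conclude that the $x$-contribution is at most $[m/2]$. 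Adding $\sum_j|b_j|\le\wt(n,k;\alpha)$ gives $l(P)\le[m/2]+\wt(n,k;\alpha)$ in Case 3 as well.

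Combining the three cases yields the asserted bound uniformly, irrespective of whether $\alpha=m$. I do not expect a genuine obstacle here: the argument is a refinement of Theorem~\ref{thm:main} in which the lone inequality $2i_1\le\alpha$ is upgraded to $2i_1\le[\alpha/2]\le[m/2]$. The only point demanding a little care is the justification $[\alpha/2]\le[m/2]$, which rests on the divisibility $\alpha\mid m$ noted at the outset; everything else is transcribed from the proof of the main theorem.
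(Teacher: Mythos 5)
Your proposal is correct and follows essentially the same route as the paper, which (implicitly, via the remark preceding the corollary) simply reruns Case 3 of Theorem~\ref{thm:main} with the hypothesis $2\deg(n,k;\alpha)\le[\alpha/2]$ replacing the weaker bound $2i_1\le\alpha$, so that the $x$-contribution $2i_1-a\le[\alpha/2]\le[m/2]$ absorbs the correction term uniformly. Your explicit justification that $\alpha\mid m$ (hence $[\alpha/2]\le[m/2]$) is a point the paper leaves tacit, but it is the standard well-definedness condition for the semidirect product and is already used in Case 3 of the main theorem.
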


\noindent The fact that every metacyclic group $G$ is a quotient of a split metacyclic group yields a bound for $\diam(G)$.

\begin{corollary}
Let $G_{m_0,\ell,n,k}$ be an arbitrary metacyclic group given by the presentation
$$G_{m_0,\ell,n,k} = \langle x, y : x^{m_0} = y^{\ell}, \, y^{n} = 1, \, x^{-1} yx = y^k \rangle,$$
where $k^{m_0} \equiv 1 \pmod{n}$, $n \mid \ell(k-1)$ and $\ell \mid n$. Then
$$\diam(G_{m_0,\ell,n,k}) \leq  \diam(G_{m_0n/\ell,n,k}) \leq \left[\frac{m_0n/\ell}{2}\right] + \wt(m_0n/\ell,k;m_0).$$ 
\end{corollary}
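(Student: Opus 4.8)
The plan is to realize $G_{m_0,\ell,n,k}$ as a quotient of the split metacyclic group $G_{M,n,k}$ with $M := m_0 n/\ell$, and then to combine two observations: an epimorphism carrying generators to generators cannot increase the word norm, and Theorem~\ref{thm:main} already bounds $\diam(G_{M,n,k})$. First I would check that $G_{M,n,k}$ is a bona fide split metacyclic group. Because $\ell \mid n$, the number $M = m_0(n/\ell)$ is an integer multiple of $m_0$, so $k^M = (k^{m_0})^{n/\ell} \equiv 1 \pmod n$ and the action $y \mapsto y^k$ is compatible with $x^M = 1$. I would then define $\phi : G_{M,n,k} \to G_{m_0,\ell,n,k}$ on generators by $x \mapsto x$ and $y \mapsto y$: the relations $y^n = 1$ and $x^{-1}yx = y^k$ carry over verbatim, while $x^M \mapsto (x^{m_0})^{n/\ell} = (y^\ell)^{n/\ell} = y^n = 1$, so $\phi$ is a well-defined epimorphism.

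The heart of the argument is the identification of $\ker\phi$, and this is where the two remaining hypotheses enter. From $k^{m_0} \equiv 1 \pmod n$ one gets $x^{-m_0} y\, x^{m_0} = y^{k^{m_0}} = y$, so $x^{m_0}$ is central in $G_{M,n,k}$; from $n \mid \ell(k-1)$ one gets $x^{-1} y^\ell x = y^{k\ell} = y^\ell$, so $y^\ell$ is central as well. Consequently $x^{m_0}y^{-\ell}$ is central, its normal closure is the cyclic subgroup $N := \langle x^{m_0} y^{-\ell}\rangle$, and imposing the single extra relation $x^{m_0} = y^\ell$ on the presentation of $G_{M,n,k}$ reproduces exactly the presentation of $G_{m_0,\ell,n,k}$. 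Thus $G_{m_0,\ell,n,k} \cong G_{M,n,k}/N$.

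With the quotient established, the first inequality is immediate: since $\phi$ maps $\{x^{\pm1},y^{\pm1}\}$ onto the corresponding generating set of the quotient, a shortest reduced path to any lift $g$ of an element $\bar g$ projects to a path to $\bar g$ of the same length, so $\|\bar g\| \leq \|g\|$, and taking the maximum over all $\bar g$ gives $\diam(G_{m_0,\ell,n,k}) \leq \diam(G_{M,n,k})$. The second inequality is then precisely the bound furnished by Theorem~\ref{thm:main} for the split group $G_{M,n,k} = G_{m_0 n/\ell,n,k}$. I expect the main obstacle to be the kernel computation in the middle step—confirming that no collapsing beyond $N$ occurs, so that the quotient presentation genuinely is that of $G_{m_0,\ell,n,k}$—together with checking that the hypotheses ($\alpha$ even and $k^{\alpha/2}\equiv -1 \pmod n$) needed to invoke Theorem~\ref{thm:main} hold for $G_{M,n,k}$, or else appealing to the conjectured unconditional form of that bound.
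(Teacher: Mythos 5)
Your proposal is correct and follows essentially the same route as the paper: the paper likewise realizes $G_{m_0,\ell,n,k}$ as a quotient of the split group $G_{m_0n/\ell,n,k}$ via the natural surjection and concludes at once, leaving the kernel identification and the word-norm comparison that you spell out as ``clear.'' Your closing caveat is well taken, since the paper's corollary silently invokes Theorem~\ref{thm:main} without verifying its hypotheses ($\alpha$ even and $k^{\alpha/2}\equiv -1 \pmod{n}$) for $G_{m_0n/\ell,n,k}$, so the second inequality as stated really does rest on the conjectured unconditional form of that bound.
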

\begin{proof}
First, we note that the condition $\ell \mid n$ does not violate the generality of the above presentation (see~\cite[Lemma 2.1]{H1}). Clearly, there exists a natural surjection $G_{m_0n/\ell,n,k} \twoheadrightarrow G_{m_0,\ell,n,k}$, and the result follows.
\end{proof}

\section{Some explicit computations}
\label{sec:compute_weights}
When $n$ is a prime $p \equiv 1 \pmod{4}$ (resp. $\equiv 3 \pmod{4}$), we showed in Theorem~\ref{thm:prime_wt_bounds} that $\wt(p, k; p-1) \leq \frac {p+3}{4}$ (resp. $\leq \frac {p+5}{4}$). Nevertheless, in practice (as we will see), the value of $\wt(p, k; p-1)$ is much less than these bounds. 

In Tables~\ref{tab:primes_1mod4} and~\ref{tab:primes_3mod4} below, we list several computations of $\wt(n, k; \alpha)$ for various primes $n$ and primitive units $k \in \mathbb{Z}_n ^{\times}$. Further, for these values of $n$, we consider $m = n-1$, and indicate how the values of the bound (derived in Corollary~\ref{cor:diam_bound_for_primes}) compares with the actual values of $\diam(G_{m,n,k})$. These computations were made using software written in Mathematica 11~\cite{W1}.  

\begin{table}[!htbp]
\begin{center}
\small\addtolength{\tabcolsep}{-4pt}
  \begin{tabular}{ | l | c | c | c | c | }
    \hline
    $(m, n, k)$ &  $\wt(n,k;\alpha)$ & ${\underline \lambda}$ realizing $\wt(n,k;\alpha)$ & $\diam(G_{m,n,k})$ & $[m/2] + \wt(n,k;\alpha)$ \\ \hline
    $(12, 13, 2)$ & $3$ & $1,1,1$ &$7$ & $9$ \\ \hline
    $(16, 17, 3)$ & $3$ & $0, 1, 1, 1$ & $9$ & $11$  \\ \hline
    $(28, 29, 2)$ & $4$ & $0, 0, 0, 1, 1, 1, 1$ & $15$ & $18$ \\ \hline
    $(36, 37, 2)$ & $4$ & $0, 0, 0, 1, 0, 0, 1, 1, 1$ & $19$ & $22$ \\ \hline
    $(40, 41, 6)$ & $4$ & $0, 0, 0, 0, 1, 0, 1, 0, 1, 1$ & $21$ & $24$ \\ \hline
    $(52, 53, 2)$ & $4$ & $0, 0, 0, 1, 1, 0, 0, 0, 0, 1, 0, 0, 1$ & $27$ & 30 \\ \hline
    $(60, 61, 2)$ & $4$ & $0, 0, 0, 0, 0, 1, 0, 0, 1, 0, 0, 1, 0, 0, 1$ & $31$ & $34$ \\ \hline 
  \end{tabular}
  \caption{Values of $\wt(p,k;p-1)$, for some primes $p \equiv 1 \pmod{4}$.}
  \label{tab:primes_1mod4}
\end{center}
\end{table}

\begin{table}[!htbp]
\begin{center}
\small\addtolength{\tabcolsep}{-4pt}
  \begin{tabular}{ | l | c | c | c | c | }
    \hline
    $(m, n, k)$ & $\wt(n,k;\alpha)$ & ${\underline \lambda}$ realizing $\wt(n,k;\alpha)$ & $\diam(G_{m,n,k})$ & $[m/2] + \wt(n,k;\alpha)$ \\ \hline
    $(6, 7, 3)$ & $2$ & $0,1,1$ & $4$ & $5$  \\ \hline
    $(10, 11, 2)$ & $3$ & $0,0,1,2$ & $6$ & $8$\\ \hline
    $(18, 19, 2)$ & $3$ & $0, 1, 0, 0, 1, 1$ & $10$ & $12$\\ \hline
    $(22, 23, 5)$ & $3$ & $1, 0, 0, 0, 0, 1, 1$ & $12$ & $14$\\ \hline
    $(30, 31, 3)$ & $4$ & $0, 0, 0, 0, 0, 0, 1, 1, 2$ & $16$ & $19$\\ \hline
    $(42, 43, 3)$ & $4$ & $0, 0, 0, 0, 0, 0, 0, 0, 0, 1, 1, 2$ & $22$ & $25$\\ \hline
    $(46, 47, 5)$ & $4$ & $0, 0, 0, 0, 0, 0, 0, 0, 0, 1, 1, 1, 1$ & $24$ & $27$ \\ \hline
    $(58, 59, 2)$ & $4$ & $0, 0, 0, 0, 1, 0, 0, 0, 0, 1, 0, 0, 1, 0, 0, 1$ & $30$ & $33$ \\ \hline 
  \end{tabular}
     \caption{Values of $\wt(p,k;p-1)$, for some primes $p \equiv 3 \pmod{4}$.}
      \label{tab:primes_3mod4}
\end{center}
\end{table}
 
\bibliographystyle{plain} 
\bibliography{Rev-semidirect}
\end{document}